\documentclass[11pt]{amsart}
\usepackage[T1]{fontenc}
\usepackage[text={6in,8.5in},centering]{geometry}
\usepackage[colorlinks=true,citecolor=black!60!green,linkcolor=black!60!red,filecolor=black!60!cyan,urlcolor=black!60!magenta]{hyperref}
\usepackage{xcolor}
\usepackage{amssymb,amsmath,amsthm,mathtools,extpfeil}
\usepackage[all,cmtip]{xy}
\usepackage{tikz,tikz-3dplot}
\usepackage[shortlabels]{enumitem}

\usepackage{caption}
\usepackage{subcaption}

\newtheorem{thm}[equation]{Theorem}
\newtheorem*{thm*}{Theorem}
\newtheorem*{thmA}{Main Theorem}
\newtheorem{lem}[equation]{Lemma}
\newtheorem{prop}[equation]{Proposition}
\newtheorem{cor}[equation]{Corollary}

\theoremstyle{definition}

\newtheorem*{rmk}{Remark}
\newtheorem*{rmks}{Remarks}

\numberwithin{equation}{section}

\DeclareMathOperator{\st}{st}
\DeclareMathOperator{\lk}{lk}
\DeclareMathOperator{\bs}{bs}

\DeclareMathOperator{\Int}{int}

\DeclareMathOperator{\Th}{Th}

\DeclareMathOperator{\Lip}{Lip}

\newcommand{\ph}{\varphi}
\newcommand{\epsi}{\varepsilon}
\newcommand{\LL}{P}

\newcommand{\ZZ}{\mathbb{Z}}

\makeatletter
\let\@wraptoccontribs\wraptoccontribs
\makeatother

\begin{document}
\title{Quantitative PL bordism}
\author[F.~Manin]{Fedor Manin}
\author[Sh.~Weinberger]{Shmuel Weinberger}
\address[F.~Manin]{Department of Mathematics, University of Toronto, ON, Canada}
\email{manin@math.toronto.edu}
\address[Sh.~Weinberger]{Department of Mathematics, University of Chicago, IL, United States}
\email{shmuel@math.uchicago.edu}
\begin{abstract}
  We study PL bordism theories from a quantitative perspective.  Such theories include those of PL manifolds, ordinary homology theory, as well as various more exotic theories such as bordism of Witt spaces.  In all these cases we show that a null-bordant cycle of bounded geometry with $V$ simplices has a filling of bounded geometry whose number of simplices is slightly superlinear in $V$.  This bound is similar to that found in our previous work on smooth cobordism.
\end{abstract}
\maketitle

\section{Introduction}

Suppose a compact manifold $M$ is \emph{nullcobordant}, i.e.\ is the boundary of some compact manifold $W$.  How complicated must $W$ be?  This is not just one question, but a whole family of questions, depending on the type of manifold that we allow $M$ and $W$ to be, and how we measure the complexity of $W$.  

To our knowledge, Gromov is the first to ask such questions; he discusses several variations in \cite[\S$5\frac{5}{7}$]{GrPC} and \cite{GrQHT}.  In the case that $M$ and $W$ are smooth, he gave as a possible measure of complexity the infimal volume of a metric given a fixed bound on the local geometry (say, sectional curvatures $|K| \leq 1$ and injectivity radius at least $1$).  He suggested that the minimal complexity of a $W$ which fills $M$ should be linear in the complexity of $M$; a slightly superlinear bound ($O(V^{1+\epsi})$ for any $\epsi>0$) was proved in \cite{CDMW}.

In this paper we prove an analogous bound in the PL category.  In this setting, ``bounded geometry of type $L$'' means that every vertex is incident to at most $L$ simplices, and volume is measured by the total number of top-dimensional simplices.
\begin{thm} \label{prelim-main}
  Let $M$ be a PL $k$-manifold with bounded geometry of type $L$ and $V$ top-dimensional simplices.  Then if $M$ is a PL boundary, it bounds a manifold $W$ with bounded geometry of type $L'$ (depending on $k$ and $L$) so that the number of simplices in $W$ is bounded by a function (again depending inexplicitly on $k$ and $L$) which is $O(V^{1+\epsi})$ for every $\epsi>0$.
\end{thm}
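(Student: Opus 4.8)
The plan is to run a quantitative piecewise-linear Thom--Pontryagin argument, parallel to the smooth case of \cite{CDMW}. Fix $N = N(k)$ large (so that $N \ge 2$ and $\pi_{k+N}$ sits in the PL stable range). The first step is an \emph{efficient PL embedding}: I would show that a PL $k$-manifold $M$ with $V$ simplices and bounded geometry of type $L$ embeds PL-ly in $\mathbb{R}^{k+N}$ as a subcomplex of a triangulated PL ball $R$, so that $M$ has a regular neighborhood $\nu \subset R$ which is a normal block bundle, and $R$, $\nu$, $M$ all have bounded geometry and $O(V)$ simplices, with constants depending only on $k$ and $L$. This is a bookkeeping argument: bounded geometry forces the star of each vertex of $M$ to be one of finitely many combinatorial types, so one assembles local model embeddings. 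Collapsing $R \setminus \nu$ and composing with a simplicial classifying map for the block bundle $\nu$ produces a simplicial Thom--Pontryagin map $\phi\colon S^{k+N} \to X$, where $S^{k+N} = R/\partial R$ is triangulated with $O(V)$ simplices and bounded geometry and $X := \mathrm{MPL}(N)^{(k+N+1)}$ is a skeleton of the PL Thom space. Two features are crucial: $X$ is \emph{simply connected} (the Thom space of a rank-$\ge 2$ block bundle over a connected base is simply connected), so we are filling a map into a simply connected target; and $[\phi]$ is precisely the bordism class of $M$ in $\pi_{k+N}(\mathrm{MPL}(N)) \cong \Omega^{\mathrm{PL}}_k$, which vanishes because $M$ is a PL boundary. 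Hence $\phi$ is nullhomotopic in $X$.

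The technical heart is a \emph{combinatorial quantitative nullhomotopy}: a nullhomotopic simplicial map from a triangulated sphere with $V$ simplices and bounded geometry into a fixed finite simply connected complex $X$ extends to a simplicial map $\Phi\colon D \to X$ from a triangulated ball $D$ with $\partial D = S^{k+N}$, having $O(V^{1+\epsilon})$ simplices for every $\epsilon > 0$ and bounded geometry, with constants depending on $X$ (hence on $k$) and on the geometry type of $S^{k+N}$ (hence on $L$). This is the simplicial analogue of the quantitative nullhomotopy estimates underlying the smooth theorem, and it is the step that forces the superlinear factor $V^{\epsilon}$. I would prove it by inducting up a simplicial model of a Postnikov tower of $X$, at each stage solving a cochain-level isoperimetric problem over an Eilenberg--MacLane space while propagating the bounded-geometry control; non-formality of $X$ at finite primes is what obstructs a clean linear bound. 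Making this work, and above all keeping every simplex count and local-geometry bound honest through the induction, is the main obstacle; an alternative would be to metrize the whole picture and quote the Riemannian statements of \cite{CDMW}, at the price of a more delicate translation between simplex counts and Lipschitz geometry.

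Finally I would make $\Phi$ block-transverse to the zero section $B \subset \mathrm{MPL}(N)$, relative to $\partial D$, where $\phi$ is already transverse with $\phi^{-1}(B) = M$; this is PL transversality in the mock-bundle sense, the only delicate point being that the subdivisions of $D$ that transversality demands increase the simplex count only by a factor depending on $X$ and $k$. The transverse preimage $W := \Phi^{-1}(B)$ is then a PL $(k+1)$-manifold, with normal block bundle the pullback of the universal bundle and with $\partial W = \phi^{-1}(B) = M$; it has $O(V^{1+\epsilon})$ simplices and bounded geometry of a type $L'$ depending only on $k$ and $L$, inherited from those of $D$ and $X$. This $W$ is the required filling.
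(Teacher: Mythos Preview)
Your outline follows the same Pontryagin--Thom template as the paper, but it mislocates the source of the exponent $1+\epsi$, and this is a genuine gap. In the paper (as in \cite{CDMW}), the nullhomotopy step yields only a \emph{linear} bound on the Lipschitz constant of the filling $D^{n+1}\to X$; simplicial approximation then produces a disk with $O(\lambda^{n+1})$ simplices, where $\lambda\sim V^{1/(n-k)}$ is the Lipschitz constant of the Thom map. The resulting count is $O\bigl(V^{(n+1)/(n-k)}\bigr)=O\bigl(V^{1+O(1/n)}\bigr)$, and one obtains $O(V^{1+\epsi})$ for every $\epsi>0$ only by letting the ambient dimension $n\to\infty$. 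You instead fix $N=N(k)$ once and for all and claim that the ``combinatorial quantitative nullhomotopy'' itself delivers $O(V^{1+\epsi})$ simplices in a $(k{+}N{+}1)$-disk for \emph{every} $\epsi$. With $N$ fixed this is strictly stronger than anything the Postnikov/obstruction-theory arguments of \cite{CDMW} provide, and your diagnosis that the extra $\epsi$ reflects non-formality at finite primes is off: even for the rationally tame Thom spaces of smooth cobordism one only gets $V^{1+O(1/n)}$ at fixed $n$. The $\epsi$ is a dimension-counting artifact, not an algebraic one.

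Once $n$ is allowed to grow, two further points separate the paper's argument from your sketch. First, rather than invoke $\mathrm{MPL}(N)^{(k+N+1)}$---for which no explicit finite simplicial model with controllable classifying maps is available, a difficulty the paper flags explicitly---the paper builds an ad hoc ``Thom-like'' complex $\Th\mathcal M(n,k)$ whose cells directly encode the finitely many local pictures of a bounded-geometry linear embedding in $\mathbb R^n$; the Thom map is then tautological and lands in an explicit finite subcomplex, and this construction is the paper's main technical contribution. Second, with $n$ depending on $\epsi$, the geometry bound $L'$ on the preimage filling a priori depends on $n$ as well; the paper adds a separate inductive local-replacement step (replacing cones on links by bounded-geometry fillings, working up in codimension) to make $L'$ depend only on $k$ and $L$. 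Your outline, having fixed $N$, does not anticipate either of these; and your assertion that $M$ sits inside a bounded-geometry ball $R$ with $O(V)$ simplices already presupposes a linear bounded-geometry filling of $\partial\nu$, which is not known and which the paper's route via Gromov--Guth embeddings into balls of volume $\sim V^{n/(n-k)}$ avoids.
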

Here, and in further similar statements, the implicit constant depends (inexplicitly) on $\epsi$; for example, the theorem does not exclude the possibility of a linear-times-polylog bound.

\subsection{Discussion}
The proof of the bound in \cite{CDMW} built on the original work of Ren\'e Thom \cite{Thom0,Thom} relating cobordism to the homotopy groups of the Thom space of the universal bundle over a Grassmannian.  This Thom space is a specific finite complex $X$, and to prove the quantitative bound one must relate the Lipschitz constant of a nullhomotopic map $f:S^N \to X$ to the Lipschitz constant of its most efficient nullhomotopy.  Even talking about Lipschitz constants requires a metric on $X$, but for an asymptotic computation the specific metric doesn't matter because all the reasonable (Riemannian or piecewise Riemannian) metrics are Lipschitz homotopy equivalent.

The method of proof is quite general, and although this is not pointed out in that paper, it applies to oriented and non-oriented cobordism, as well as spin and complex cobordism and any similar variant.  The reason is that in all such cases Thom's method gives the appropriate reduction to the homotopy theory of finite polyhedra, and the spaces that arise are particularly simple from the point of view of rational homotopy theory.

However, this method does not apply to PL cobordism, because the classifying spaces are not close relatives of compact Lie groups.  While they are abstractly homotopy equivalent to spaces with finite skeleta, we do not know of a natural way of representing them as such.   This paper studies the complexity of PL nullcobordisms, with an eye towards developing techniques that can apply in situations where the traditional methods of geometric topology (such as classifying spaces, h-principles, etc.) reduce geometric problems to the homotopy theory of infinite complexes.  

For infinite complexes, one can choose metrics that are inequivalent at large scales, and different choices can lead to very different measures of complexity.  For example, a model for $K(\mathbb Z,n)$ of finite type (i.e.\ with finitely many simplices in each dimension) and a simplicial group model (with infinitely many) lead to very different ``quantitative cohomology theories''.  A related situation arises in K-theory, whose concrete manifestation is the following.  Suppose one is interested in vector bundles of dimension $n$ over a finite complex $X$ of lower dimension.  The number of such bundles which admit a connection with bounded curvature $K$ grows as a polynomial in $K$ whose degree is related to the cohomology of $X$.  However, the naturally isomorphic set of stable vector bundles over $X$ can be realized by bundles (of varying dimension) with a uniform bound on curvature; if one takes curvature as the measure of complexity, then infinitely many classes are realized with bounded complexity.

Now, let us turn more specifically to the topic of this paper.  A reasonable notion of the complexity of a PL manifold $M$ is the number of simplices in a triangulation.  We could consider the problem: given a PL $k$-manifold $M$ with $V$ simplices, how many simplices are contained in the simplest $W$ whose boundary is $M$?

It is true, but not completely obvious, that the complexity of a nullcobordism can be bounded by a recursive (i.e.\ computable) function of the complexity of $M$.  This is equivalent to the statement that the PL bordism problem is algorithmically decidable, and that follows from the deep work that is explained e.g.\ in \cite[Ch. 14]{MadMil}.  One basic step in the analysis is the structure of $G/PL$\footnote{The homotopy fiber of $BPL \to BG$, the map from the classifying space for PL structures to that for spherical fibrations.}, which is computed using surgery theory and the Poincar\'e conjecture.  The algorithmic nature of these ingredients seems quite difficult to unravel.

One would have been led directly to the Poincar\'e conjecture in thinking about this question when considering whether a candidate $M$ that someone hands you is actually a PL manifold.  Maybe your interlocutor is playing a joke on you?  You can be sure that $M$ is a homology manifold, but it is much harder to know that $M$ is a manifold---you would need to see that the links of simplices are spheres.  Checking that a manifold is actually a sphere, if you think it should be, is exactly what the Poincar\'e conjecture is about. 

Despite first impressions, it is not quite necessary to complete an analysis of the algorithmic complexity of the Poincar\'e conjecture to handle the bordism question: one can skirt around it by producing a polyhedron bounded by $M$ which, if $M$ is a manifold, happens to be a manifold with boundary.  A strategy of this kind can work in some settings.

Regardless, we do not know how to deal with the PL bordism problem for arbitrary triangulations (although we do have a program that, if it succeeds, would give a tower of exponentials bound for the rise in complexity).  At the same time, there is no indication that the true answer is any more nonlinear than it is for smooth bordism.

Instead we consider the case of PL manifolds of bounded geometry.  That is, we consider PL manifolds with some bound on the number of simplices incident to each vertex, and build cobordisms constrained by a similar, perhaps larger bound.  This condition is similar to the smooth locally bounded geometry condition in that the amount of ``local information'' is bounded.  In fact, there is a closer correspondence: any smooth manifold of bounded geometry has a bounded geometry triangulation at scale $\sim 1$, and any smoothable PL manifold of bounded geometry admits a smoothing with locally bounded geometry and with simplices of bounded
volume; see Appendix \ref{smoothing}.

Moreover, in every dimension $d$, there is an $L(d)$ such that every PL $d$-manifold is PL homeomorphic to one with bounded geometry of type $L(d)$.  (However, given a manifold with $V$ simplices, the number of subdivisions required to implement a PL isomorphism with such a manifold grows faster than any computable function of $V$.)  So bordism of PL manifolds with bounded geometry is equivalent, as far as pure topology is concerned, to bordism of PL manifolds.

As in the smooth case, our methods work in a number of settings: they extend from PL manifolds to pseudomanifolds whose singularities come from a prescribed family (see \S\ref{S:singularities} for a precise definition), as well as to bordism homology of spaces other than a point.  (Recall that manifolds equipped with a map to a space $Y$, modulo cobordisms with a map to $Y$, form a generalized homology theory for $Y$ in the sense of Eilenberg--Steenrod.  This is also true for the more general setting we consider, see \cite{BRS}.)  The full statement of our main theorem is therefore more general than Theorem \ref{prelim-main}:



\begin{thmA}
  Let $\mathcal M$ be a category of PL manifolds with prescribed singularities.

  For every $k$, $L$, and $\epsi>0$, there are constants
  $C=C(\mathcal M,k,L,\epsi)$ and $L'=L'(\mathcal M,k,L)$ such that every PL
  nullcobordant triangulated $k$-dimensional $\mathcal M$-manifold $X$ with $V$
  $k$-simplices and bounded geometry of type $L$ has a filling with at most
  $CV^{1+\epsi}$ simplices and bounded geometry of type $L'$.  This holds for
  both oriented and unoriented cobordism.

  More generally, for a finite simplicial complex $Y$, there is a constant
  $C=C(\mathcal M,k,L,\epsi,Y)$ such that if $X$ is as above and $f:X \to Y$ is
  a simplicial map representing $0$ in the bordism homology group
  $\Omega^{\mathcal M}_k(Y)$, then there is a simplicial extension
  $\tilde f:W \to Y$ to a filling of $X$ with at most $CV^{1+\epsi}$ simplices
  and bounded geometry of type $L'$.
\end{thmA}
Note that the constants depend in an inexplicit way on the bound on geometry.
This is a point of contrast with the smooth category, where we can always change
the bound on geometry by rescaling, which automatically scales the nullcobordism
as well.


\subsection{Overview of proof}
As in \cite{CDMW}, we follow the usual proof of the classification of manifolds
up to cobordism, bounding the complexity of each step.  The main additional
technical challenge stems from the fact that while classifying spaces for PL
cobordism theories exist by Brown representability, unlike in the smooth case,
there are no nice explicit models of finite type that we can use to construct
our classifying maps.  We resolve this by building the models we need, but these
models may not actually be homotopy equivalent to the relevant classifying
spaces; they do, however, have enough topological resemblance for our purposes.

Let $V$ be the number of $k$-simplices of the manifold $X$.  We proceed as
follows:
\begin{enumerate}
\item Embed $X$ in $S^n$, with some control over the shape of a regular
  neighborhood.  Here we can choose any sufficiently large $n$.
\item This induces an $\ph(V)$-Lipschitz map from $S^n$ to a fixed compact
  subspace of a kind of Thom space for $\mathcal M$-manifolds.
\item One constructs a $\psi(\ph(V))$-Lipschitz extension of this map to
  $D^{n+1}$, with image in a larger fixed compact subspace of this Thom space.
\item From a simplicial approximation of this nullhomotopy, one can extract an
  $\mathcal M$-manifold embedded in $D^{n+1}$ which fills $X$ and whose number of
  simplices is bounded by the number of simplices in the approximation and is
  therefore $O(\psi(\ph(V))^{n+1})$.
\item After the previous step, the bound on the local geometry of the resulting
  filling $W$ depends on $n$.  We tidy up by applying local modifications to
  $W$, making the bound on the geometry independent of $n$ at the expense of
  increasing the number of simplices by an additional multiplicative constant.
\end{enumerate}
The final estimate depends on bounds on the functions $\ph$ and $\psi$.  Our
results from the appendix to \cite{CDMW} suffice to show that
\[\ph(V) \leq C(k,n)V^{\frac{1}{n-k}}(\log V)^{2k+2},\]
and we will show that $\psi(L)$ is at most linear.  This gives an overall bound
of $O(V^{1+O(1/n)})$ on the size of the resulting filling; since $n$ can be arbitrarily large, this gives our desired bound.

\subsection{Notation}
We collect here some notation and terminology for combinatorial operations on simplicial complexes which is used throughout the rest of the paper.

The \emph{join} of two abstract simplicial complexes $X$ and $Y$, written $X * Y$, is the complex whose simplices are $\Delta \sqcup \Delta'$, where $\Delta$ is a simplex of $X$ and $\Delta'$ is a simplex of $Y$.  Geometrically, the join may be thought of as the union of all line segments joining points of $X$ to points of $Y$.

Given a simplicial complex $X$, the \emph{cone} of $X$, written $cX$, is its join with a point.  The \emph{suspension} of $X$, written $SX$, is its join with $S^0$ (i.e.~two points).  We may also write $S^iX$ for the $i$ times suspension, i.e.~$S^iX=\underbrace{S \cdots S}_{i\text{ times}}X$.

Given a simplicial complex $X$ and a simplex $\Delta$ within it, the \emph{star} of $\Delta$, denoted $\st\Delta$, is the union of all simplices of $X$ which contain $\Delta$.  The \emph{open star} is the interior of the star.  The \emph{link} of $\Delta$, denoted $\lk\Delta$, is the subcomplex of $X$ such that $\Delta * \lk\Delta = \st\Delta$.

Finally, given an abstract simplicial complex $X$, the \emph{barycentric subdivision} $\bs(X)$ is the simplicial complex whose vertices are in bijection with simplices of $X$ and whose simplices are in bijection with ascending chains of simplices of $X$.  Geometrically, this corresponds to dividing each simplex
\[\Delta^k=\{t_0v_0+\cdots+t_kv_k : t_i \in [0,1], \sum_i t_i=1\}\]
into $(k+1)!$ subsets corresponding to permutations $\sigma$ of $\{0,1,\ldots,k+1\}$: the simplex
\[\Delta^k_\sigma=\{t_0v_0+\cdots+t_kv_k : 1 \geq t_{\sigma(0)} \geq t_{\sigma(1)} \geq \cdots \geq t_{\sigma(k)} \geq 0, \sum_i t_i=1\}\]
is indexed by the chain
\[\{\sigma(0)\} \subset \{\sigma(0),\sigma(1)\} \subset \cdots \subset \{\sigma(0),\ldots,\sigma(k)\}=\Delta^k.\]

\subsection*{Acknowledgements}
We are thankful to two anonymous referees for their helpful comments.

\section{Examples and corollaries}

We now discuss the implications of our results for particular categories of
manifolds with singularities.

\subsection{PL manifolds}
The most obvious such category is usual PL manifolds, with no singularities.
Here we remark upon a prior result of F.~Costantino and D.~Thurston:
\begin{thm*}[{\cite[Theorem 5.2]{CoTh}}]
  Every $3$-manifold $M$ with a triangulation with $V$ $3$-simplices has a
  filling $W$ which has bounded geometry and $O(V^2)$ simplices.
\end{thm*}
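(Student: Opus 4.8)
Here is one route, exploiting that dimension three is special. The starting point is the classical fact that every closed $3$-manifold, orientable or not, bounds a compact $4$-manifold (the bordism groups $\Omega_3^{\mathrm{SO}}$ and $\Omega_3^{\mathrm{O}}$ both vanish); the plan is to make this constructive by exhibiting a handle presentation of a filling whose size and local geometry are controlled by $V$. The natural framework is that of surgery presentations and Turaev shadows; I will describe the orientable case, the non-orientable one being analogous.

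First I would convert the triangulation $\mathcal T$, which has $V$ tetrahedra, into efficient global data. Associated to $\mathcal T$ there is a Heegaard splitting $M=H\cup_\phi H'$ of genus $g=O(V)$: take $H$ to be a regular neighborhood of the $1$-skeleton of $\mathcal T$ and $H'$ a regular neighborhood of the dual graph (one vertex per tetrahedron, one edge per triangle), which are complementary handlebodies of genus $1-v+e=O(V)$, where $v,e$ are the numbers of vertices and edges of $\mathcal T$. Equivalently, $M\setminus B^3$ has a special spine with $O(V)$ true vertices. Relative to a fixed standard genus-$g$ splitting of $S^3$, the re-gluing $\phi$ is a mapping class of $\Sigma_g$, and the key point---this is where the work concentrates---is that, because $\phi$ is encoded by the bounded-complexity combinatorics of $\mathcal T$ and not arbitrary, it can be written as a product of $N=O(V)$ Dehn twists, each supported in a bounded subsurface of $\Sigma_g$.

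Given such a word I would run the Lickorish--Wallace recipe: pushing the $N$ twisting curves off $\Sigma_g\subset S^3$ in the standard way yields a framed link $L\subset S^3$ with $N=O(V)$ components, all framings $\pm1$, and crossing number $O(V^2)$, since each pair of the curves contributes only boundedly many crossings. By construction $M$ is the surgery on $L$, so $M=\partial W$ for $W=B^4\cup(\text{$2$-handles attached along $L$})$. Finally I would triangulate $W$ straight from the diagram: a framed-link diagram with $c$ crossings and framings of bounded size is combinatorially bounded, so $W$ is assembled from one triangulated $4$-ball together with $O(V)$ triangulated thickened $2$-handles, the pattern near each crossing and along each attaching annulus being drawn from a finite list of standard blocks; this gives $O(c)=O(V^2)$ simplices and bounded geometry. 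It is essential here that the framings are $\pm1$: a framing of size $f$ encodes $f$ full twists and would force on the order of $f$ simplices along the attaching region merely to keep vertex degrees bounded.

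The main obstacle is the claim flagged in the second paragraph. A generic mapping class of $\Sigma_g$ needs a number of Dehn twists that can grow exponentially in $g$, so there is no room to be wasteful: one must genuinely use the explicit bounded-complexity simplicial description of $\phi$ and turn it into a bounded-length word in geometrically simple twists, with no blow-up in either the number or the complexity of the twists. A more streamlined implementation replaces this bookkeeping by the language of shadows---the surgery diagram is literally a shadow of $W$ whose gleams are the framings and whose number of true vertices is comparable to the crossing number, and a simple $2$-polyhedron with $O(V^2)$ true vertices and bounded gleams thickens canonically to a $4$-manifold with $O(V^2)$ simplices of bounded geometry. We note in passing that this quadratic bound is subsumed by the Main Theorem above---take $\mathcal M$ the category of PL manifolds, $k=3$, and $Y$ a point, which even yields an $O(V^{1+\epsi})$ filling---so the interest of the quadratic estimate lies in its elementary, dimension-specific proof and in the very tight control it gives on the bound on the local geometry.
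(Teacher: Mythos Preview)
First, note that the paper does not prove this statement: it is quoted as a prior result of Costantino and Thurston, and the paper's own contribution in this section is the stronger Corollary~\ref{cor:M3}, obtained from the Main Theorem together with a separate argument that any triangulated $3$-manifold admits a bounded-geometry retriangulation with $O(V)$ simplices. So there is no in-paper proof to compare against; the relevant benchmark is the Costantino--Thurston argument itself.

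Your outline is in the right spirit and, in its final paragraph, essentially converges to their method: they build a shadow of a $4$-dimensional filling directly from the combinatorics of the triangulation, show it has $O(V^2)$ true vertices (coming from pairwise interactions among the $O(V)$ singular fibers of a map $M\to\mathbb{R}^2$ read off from $\mathcal T$), and then thicken. The Heegaard/Dehn-twist/Lickorish--Wallace route you sketch first is a reasonable alternative packaging, but the step you yourself flag as ``the main obstacle'' is the entire content, and you have not supplied it. Two separate assertions need proof: that the gluing map $\phi$ admits a factorization into $O(V)$ Dehn twists, and that the resulting curves, pushed into $S^3$, have only $O(V^2)$ crossings. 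Neither follows formally from ``$\phi$ comes from a triangulation'': a generic mapping class of $\Sigma_g$ with $g=O(V)$ can require far more than $O(V)$ twists in any fixed generating set, and even $O(V)$ curves on a genus-$O(V)$ surface need not have pairwise bounded intersection. Making these estimates go through is precisely the work Costantino and Thurston carry out (in the shadow language), and it is not bypassed by the Heegaard reformulation. As written, then, your proposal names the correct mechanism and the correct exponent but leaves the decisive quantitative step as an unproven claim.
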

Note that in this theorem, $M$ is not required to have bounded geometry.
However, in dimension 3 this doesn't matter, and our main theorem implies a
stronger result:
\begin{cor} \label{cor:M3}
  Every $3$-manifold $M$ with a triangulation with $V$ $3$-simplices has a
  filling $W$ which has bounded geometry and $f(V)$ $4$-simplices, where
  $f(V)=O(V^{1+\epsi})$ for every $\epsi>0$.
\end{cor}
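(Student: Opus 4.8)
The plan is to derive the corollary from the Main Theorem. Two hypotheses of that theorem must be supplied: that $M$ bounds, and that it carries a bounded-geometry triangulation with comparably few simplices. The first is automatic in dimension three, since $\Omega_3^O = \Omega_3^{SO} = 0$ and hence every closed $3$-manifold is both an oriented and an unoriented PL boundary; this is why the boundary hypothesis, present in the Main Theorem, is absent from the corollary.

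So the real content is a \emph{regularization} statement playing the role, in dimension $3$, of the non-effective general fact recalled in the introduction: every closed triangulated $3$-manifold with $V$ tetrahedra is PL homeomorphic to one of bounded geometry of type $L(3)$ with $O(V)$ tetrahedra. Granting this, we apply the Main Theorem to the regularized manifold, obtaining a filling with $O(V^{1+\epsi})$ simplices and bounded geometry; a near-linear bound $O(V^{1+o(1)})$ on the regularized complexity, with the exponent in the Main Theorem slightly decreased, would serve just as well. (By contrast, the $O(V^2)$ bound of Costantino--Thurston is obtained by a different route, via shadows, that does not pass through such a regularization.)

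To prove the regularization statement I would pass to the barycentric subdivision $M'$. A short flag count shows that the only vertices of $M'$ of unbounded valence are the barycenters $\hat v$ of vertices of $M$ (valence $6t_v$, with $t_v$ the number of tetrahedra at $v$, and $\sum_v t_v = 4V$) and the barycenters $\hat e$ of edges of $M$ (valence $4d_e$, with $d_e$ the degree of $e$, and $\sum_e d_e = 6V$); in particular every high-valence vertex lies in the $1$-skeleton $M^{(1)}$, and the same holds after any further subdivision. Choose a regular neighborhood $A$ of $M^{(1)}$ large enough to contain all vertices of unbounded valence, so that the complementary handlebody $B = \overline{M' \setminus A}$ and the splitting surface $\Sigma = \partial A = \partial B$ are already of bounded geometry; it then remains only to re-triangulate $A$ rel $\Sigma$. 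This is a genuinely one-dimensional task: $A$ deformation retracts onto the graph $M^{(1)}$, with $V_0 \le V$ vertices and $V_1 = V_0 + V$ edges, so it decomposes into $V_0$ balls (the ball at $v$ having boundary a bounded-geometry triangulated $2$-sphere with $O(t_v)$ triangles, carrying up to $O(t_v)$ marked disjoint attaching disks) together with $V_1$ thickened arcs, whose cross-sections we are free to take of bounded size. Re-triangulating each piece with bounded geometry and adding up yields $O\bigl(\sum_v t_v + V_1\bigr) = O(V)$ tetrahedra.

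The main obstacle is this last re-triangulation: it must be compatible both with the fixed bounded-geometry triangulation of $\Sigma$ on $\partial A$ and with the pattern by which the thickened arcs attach to the balls, without introducing new high-valence vertices. The heart of the matter is the sublemma that a bounded-geometry triangulated $2$-sphere with $N$ triangles bounds a bounded-geometry triangulated $3$-ball with $O(N)$ tetrahedra, with prescribed disjoint disks on the boundary appearing as subcomplexes; this should follow by repeatedly performing a bistellar move at a vertex of valence at most $6$ of the sphere --- there is always one, by Euler's formula --- peeling off a bounded-complexity collar at each step and checking that bounded geometry is maintained throughout, until one reaches $\partial\Delta^3$ and caps off with a single tetrahedron. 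This is precisely where dimension $3$ enters: the failure of bounded geometry is concentrated along the $1$-skeleton, whereas in higher dimensions it is genuinely higher-dimensional and, as noted in the introduction, cannot be repaired at any computable cost.
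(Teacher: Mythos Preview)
Your overall strategy matches the paper's exactly: reduce to a linear bounded-geometry regularization of $M$, then invoke the Main Theorem together with $\Omega_3=0$.  Your organization of the regularization via a handlebody decomposition of a regular neighborhood of $M^{(1)}$ is a mild reshuffling of the paper's argument, which instead barycentrically subdivides once and then sequentially replaces the stars of edge-barycenters (cones on circles, filled by bounded-geometry disks) and then of vertex-barycenters (cones on $S^2$'s which, after the first step, have at most $9$ triangles at each vertex).  Both routes isolate the same key sublemma: a bounded-geometry triangulated $S^2$ with $N$ faces bounds a bounded-geometry $3$-ball with $O(N)$ tetrahedra.

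Where you genuinely diverge from the paper---and where there is a gap---is in the proof of that sublemma.  The paper does \emph{not} argue combinatorially: following Gromov, it smooths the bounded-geometry $S^2$ to a Riemannian sphere of curvature $\geq -K$, invokes Alexandrov's theorem to embed it isometrically in $\mathbb H^3_{-K}$, and retriangulates the enclosed ball, whose hyperbolic volume is $O(N)$.  Your bistellar proposal is natural but, as written, does not obviously terminate with bounded geometry.  Removing a degree-$5$ vertex and fanning the resulting pentagon from one neighbor raises that neighbor's degree by $1$; Euler only guarantees \emph{some} vertex of degree $\leq 5$, not one whose neighbors all have slack, so over $O(N)$ steps the maximum degree can drift upward without an evident ceiling.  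The phrase ``checking that bounded geometry is maintained throughout'' is thus hiding the entire difficulty.  This may well be repairable by a more careful choice of moves or an amortized argument, but you have not supplied one, and the paper's geometric argument sidesteps the issue entirely.
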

\begin{proof}
  It suffices to show that every $3$-manifold $M$ with $V$ $3$-simplices has a
  triangulation of bounded geometry with $O(V)$ simplices.  Such a triangulation
  can be produced as follows.  This strategy was outlined by Gromov
  \cite[\S$5\frac{5}{7}$II$'$]{GrPC}.

  First, take the unbounded triangulation and barycentrically subdivide once.

  Now let $\mathcal V_1$ be the set of vertices which come from the barycenters
  of edges.  The star of each such vertex looks like $ScZ$ (the suspension of
  the cone on $Z$), where $Z$ is a circle with some number $n$ of edges.  It is
  easy to see that $Z$ can be filled with a disk $D$ with $O(n)$ triangles, at
  most $7$ of which meet at a vertex, and at most $3$ of which meet at a vertex
  on $Z$.  So for each vertex in $\mathcal V_1$, we replace the subcomplex $ScZ$
  with $SD$, getting a PL homeomorphic complex $M'$.

  Now let $\mathcal V_2$ be the set of vertices of $M'$ that correspond to the
  original vertices of $M$.  The link of such a vertex in $M'$ is a
  triangulation of $S^2$ with $n$ faces and at most $9$ faces meeting at every
  vertex.  Such a triangulation again has a filling with bounded geometry and
  $O(n)$ $3$-simplices.  Gromov sketches the proof in
  \cite[\S$5\frac{5}{7}$II$''$]{GrPC}: after smoothing out the triangulation,
  one gets a Riemannian metric on $S^2$ with curvature bounded below by some
  $-K$.  By a theorem of Alexandrov \cite[\S XII.2]{Alex}, such an $S^2$ embeds
  isometrically into $\mathbb H^3$ with curvature $-K$.  In hyperbolic space,
  this sphere bounds a ball of volume $O(n)$.  Retriangulating that ball gives
  us our bounded geometry filling.

  Replacing the star of each vertex in $\mathcal V_2$ with a bounded geometry
  filling of its link gives us our bounded geometry triangulation of $M$.
\end{proof}
Notice that this strategy generalizes to higher dimensions: if every bounded
geometry triangulated $S^k$ of volume $V$ can be filled with a bounded geometry
disk of volume $F_k(V)$, then every $n$-manifold $M$ with a triangulation of
unbounded geometry and volume $V$ has a triangulation of bounded geometry and
volume
\[G_n(V)=F_{n-1}(F_{n-2}(\cdots F_3(O(V))\cdots)).\]
However, we currently have no upper bounds on the functions $F_k$ for $k \geq 3$.  Finding such estimates seems to be a worthwhile and difficult problem.  For now, we show the first nonlinear \emph{lower} bound for this type of problem:
\begin{thm}
  The function $F_{8n-1}$ is at least quadratic for all $n \geq 1$.
\end{thm}
In particular, in light of Theorem \ref{prelim-main}, the geometrically simplest nullcobordisms of bounded geometry $(8n-1)$-spheres are not always disks.
\begin{proof}
  We show this by constructing a sequence of PL triangulations $T_N$ of
  $S^{8n-1}$ with $O(N)$ simplices and locally bounded geometry, such that disks
  of locally bounded geometry filling them must have $\Omega(N^2)$ simplices.
  We construct these as boundaries of disks in manifolds homotopy equivalent to
  $S^{4n} \times S^{4n}$.

  Let $p:(X,S^{4n-1} \times D^{4n}) \to (D^{4n}, S^{4n-1})$ be the unit disk
  bundle of a vector bundle with Pontryagin number $p_n=1$ relative to the
  boundary.  Let $Y_0$ be a thickening of
  $S^{4n} \vee S^{4n} \subset S^{4n} \times S^{4n}$ and produce a manifold $Y_N$
  with boundary by puncturing each thickened copy of $S^{4n}$ $N$ times and
  gluing in copies of $X$.  Then $Y_N$ has a bounded-degree PL triangulation
  with $O(N)$ simplices: one can use $O(1)$ simplices for each copy of $X$ and
  $O(1)$ simplices for a neighborhood of the wedge point, and $O(N)$ simplices
  to connect these together.  Moreover, since $p$ has zero Euler class, the
  induced sphere bundle is trivial, and hence $\partial Y_N$ is topologically a
  sphere.  In general it is an exotic sphere; however:
  \begin{lem}
    There is an integer $p>0$ such that $\partial Y_N$ is the standard
    sphere when $p \operatorname{|} N$.
  \end{lem}
  \begin{proof}
    Coning off $\partial Y_N$ gives a topological manifold $M_N$ homotopy
    equivalent to $S^{4n} \times S^{4n}$.  From the surgery exact sequence it
    immediately follows that such structures are classified by their $p_1$
    class lying in $\ZZ \oplus \ZZ$.  By \cite{OnSS}, the set of smoothable such
    structures contains a lattice.  When $(N,N)$ is in this lattice,
    $\partial Y_N$ must be the standard sphere.
  \end{proof}

  When $\partial Y_N$ is the standard sphere, gluing in a disk gives a smooth
  manifold $M_N$.  By construction, its $n$th Pontryagin class evaluates to $N$
  on each $S^{4n}$ factor, and hence the Pontryagin number $p_{n,n}=2N^2$.
  Then by the Hirzebruch signature formula, since all other Pontryagin numbers
  are zero, we must have $p_{2n} \sim N^2$.  By \cite{LR}, Pontryagin classes
  can be computed via local combinatorial formulas, and therefore any
  bounded-geometry triangulation of $M_N$ must have $\Omega(N^2)$ simplices,
  with constants depending on the choice of bound on geometry.  That means that
  for any fixed such choice, any bounded-geometry disk filling our
  triangulation of $\partial Y_N$ will have $\Omega(N^2)$ simplices.
\end{proof}

\subsection{General pseudomanifolds}
The next most obvious application of our theorem after PL manifolds is to
pseudomanifolds with arbitrary singularities.  Notice that the corresponding
problem with unbounded geometry is trivial, as one can fill any pseudomanifold  by taking its cone.  On the other hand, restating our main theorem in
this case gives the following new result:
\begin{cor}
  Let $Y$ be a finite complex.  Then every homologically trivial simplicial
  $k$-cycle in $Y$ of bounded geometry of type $L$ and volume $V$ has a filling
  of bounded geometry of type $L'=L'(k,L)$ and volume $f(V)$, where
  $f(V)=O(V^{1+\epsi})$ for every $\epsi>0$.
\end{cor}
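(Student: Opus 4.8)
The plan is to obtain the corollary as the special case of the Main Theorem in which $\mathcal M$ is the category of \emph{all} PL pseudomanifolds with arbitrary singularities — an admissible category in the sense of \S\ref{S:singularities} — after setting up a dictionary between simplicial cycles in $Y$ and PL pseudomanifolds mapping simplicially to $Y$. The dictionary rests on one elementary construction, which I would record first. To a simplicial $j$-chain $w=\sum_\sigma a_\sigma\sigma$ in $Y$ attach a $j$-dimensional PL pseudomanifold-with-boundary $P(w)$ together with a simplicial map $P(w)\to Y$: take $|a_\sigma|$ oriented copies of $\Delta^j$ for each $j$-simplex $\sigma$ in the support of $w$; over each $(j-1)$-simplex $\tau$ of $Y$ the copies meeting $\tau$ carry signed multiplicities summing to the coefficient of $\tau$ in $\partial w$, so all but $|(\partial w)_\tau|$ of them can be glued in orientation-reversing pairs, leaving $(\partial w)_\tau$ of them free; glue accordingly, and let the tautological linear maps $\Delta^j\to\sigma\hookrightarrow Y$ assemble to the map $P(w)\to Y$. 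By construction $P(w)$ has $\sum_\sigma|a_\sigma|$ top simplices, its fundamental chain maps to $w$, and $\partial P(w)\cong P(\partial w)$ compatibly over $Y$; it is a disjoint union of pseudomanifolds, which is harmless. (In the unoriented case one uses $\mathbb Z/2$ coefficients throughout.) If $P(w)$ is not literally a simplicial complex — two copies could be glued along more than one face — one passes to the barycentric subdivisions of $P(w)$ and of $Y$, which multiplies the number of simplices by a constant depending only on $j$ and changes the geometry bound by a comparable amount; I would suppress these bounded factors below.

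With this in hand the proof is short. A cycle admits a filling only if it is nullhomologous, so let $z$ be the given $k$-cycle and write $z=\partial c$ for some $(k+1)$-chain $c$. Put $X=P(z)$ with its map $f\colon X\to Y$: then $X$ has $V$ top simplices, and since the $k$-simplices of $X$ at a vertex $v$ inject into the simplices of $z$ at $f(v)$, the bounded geometry of type $L$ of $z$ (each vertex of $Y$ lying on at most $L$ simplices of $z$, counted with multiplicity) forces bounded geometry of $X$ of type depending only on $k$ and $L$. Moreover $P(c)$ is an $\mathcal M$-manifold-with-boundary with $\partial P(c)\cong X$ and a simplicial map to $Y$ extending $f$, so $(X,f)$ is null-bordant over $Y$ — that is, it represents $0$ in $\Omega^{\mathcal M}_k(Y)$. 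Note that $c$, and hence $P(c)$, may be arbitrarily large: this is the ``trivial filling of unbounded geometry'', and replacing it by an efficient one is exactly the content of the Main Theorem. I would then apply the Main Theorem to $(X,f)$, obtaining, for each $\epsi>0$, an $\mathcal M$-manifold-with-boundary $W$ with $\partial W=X$, a simplicial extension $\tilde f\colon W\to Y$, at most $C(\mathcal M,k,L,\epsi,Y)\,V^{1+\epsi}$ simplices, and bounded geometry of type $L'=L'(k,L)$. The pushforward $\tilde f_*[W]$ is a simplicial $(k+1)$-chain in $Y$ with $\partial\tilde f_*[W]=\tilde f_*[X]=f_*[X]=z$, supported on at most $CV^{1+\epsi}$ top simplices and presented by the pseudomanifold $W$ of bounded geometry of type $L'$; absorbing the subdivision constants into $C$ (and, if one wishes, undoing the subdivision of $Y$) gives the claim, uniformly in the oriented and unoriented cases.

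The one point that will demand care is the construction $P$ itself: one must check that, with consistent choices of pairings, it outputs a genuine simplicial complex of bounded geometry equipped with a simplicial map to (a subdivision of) $Y$, and that $\partial P(c)$ can be arranged to coincide with $P(z)$ so that the null-bordism $P(c)$ is actually available. Everything past that is a direct transcription of the Main Theorem, together with the observation — implicit in the construction of $P$ — that for this $\mathcal M$ the theory $\Omega^{\mathcal M}_*(-)$ is just ordinary simplicial homology, so that ``nullhomologous'' and ``null-bordant over $Y$'' mean the same thing.
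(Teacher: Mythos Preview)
Your proposal is correct and follows the same approach the paper intends: the paper presents this corollary without proof, describing it simply as a restatement of the Main Theorem in the case where $\mathcal M$ is the category of all pseudomanifolds (so that $\Omega^{\mathcal M}_*$ coincides with ordinary homology). Your explicit dictionary via $P(w)$ between simplicial chains in $Y$ and pseudomanifolds mapping to $Y$ fills in exactly the detail the paper leaves implicit.
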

\begin{proof}
  Given a homologically trivial cycle, match each facet of one of its simplices
  to an equal facet with opposite orientation.  Identifying the matched facets
  creates a simplicial set.  Taking a single barycentric subdivision turns this
  simplicial set into a (not necessarily connected) pseudomanifold to which the
  main theorem can be applied.
\end{proof}

This fact may be useful in the study of high-dimensional expanders, random
complexes, and related topics.

\subsection{Other classes}
There are a number of other categories of manifolds with singularities whose
bordism theory is relevant in geometric topology.  These include Witt spaces
\cite{Siegel,Goresky}, whose bordism homology theory is closely related to
KO-theory.  Many other examples are discussed in \cite[\S5.2.1]{Friedman}.

Some such examples, including Witt spaces, have infinitely generated bordism groups.  In this case, the corresponding classifying spaces do not have finite type.  However, for a given bound on geometry, bordism classes are classified by a subspace with finite skeleta.  In other words, one can find bordism classes which require arbitrarily high local complexity, as expressed by the topology of links of simplices.  Contrast this with the case of PL manifolds, in which all links have the same topology.  Our main theorem still holds as written in such a setting, although for any given bound $L$ on the geometry, we are really only considering part of the bordism theory.

This suggests that, unlike in the case of PL manifolds, the problem of finding null-bordisms for Witt spaces without bounded geometry is fundamentally different.

On the other hand, note that if, for example, one is interested in Witt
null-bordisms of non-singular PL manifolds, this can be done with finitely many
topological link types, even without a bounded geometry assumption on the
original manifold.

\subsection{Relation to secondary invariants}

One area of application of these ideas is, in principle, to the connection between ``secondary'' invariants and complexity.  The most famous secondary invariants---the $\eta$-invariants of Atiyah--Patodi--Singer \cite{APS,APS2,Wall}---and their $L^2$ analogues due to Cheeger and Gromov are linearly bounded as a function of the volume of a manifold with bounded geometry \cite{CheeGr}.  These invariants are defined using the signature of a nullcobordism, and indeed, this inequality was Gromov's original motivation for suggesting that cobordisms may have linear volume.  Since our volume estimates are superlinear, they do not recover the optimal asymptotics of Cheeger and Gromov.

Cha \cite{Cha}, in dimension 3, and Lim and Weinberger \cite{LW}, in all dimensions, have proven an inequality for the Cheeger--Gromov $\rho$-invariant in terms of the number of simplices by a mixture of algebraic and geometric methods.  The followup paper of Cha and Lim \cite{CL} makes essential use of a non--locally finite classifying space for proving such estimates, and strongly suggests the possibility of sometimes extending the results of this paper beyond bounded geometry. 

For other invariants, our results do provide a new estimate.  For example, the bordism homology group $\Omega_k^{\mathcal M}(B\Gamma)$, when $\mathcal M$ is the class of Witt spaces, is used to define higher $\rho$-invariants of manifolds with fundamental group $\Gamma$ \cite{rho}.  If $B\Gamma$ is of finite type, our
results can then be used to bound these higher $\rho$-invariants, and therefore (in certain situations) to bound the number of manifolds of bounded geometry homotopy equivalent to a given one.  On the other hand, if $B\Gamma$ is not of finite type, we get an important example of a situation in which our results do not apply.

\begin{rmk}
  The $\eta$-invariants can also be studied in terms of a very different and much cruder complexity measure: number of handles, rather than number of simplices, in a nullcobordism.  For example, in dimension three, it is the classical Heegaard genus.  Unlike the case of simplices, there can be infinitely many manifolds with boundedly many handles, e.g.~there are infinitely many 3-manifolds with Heegaard genus 1.  Gromov suggests \cite[\S5$\frac{5}{7}$.I]{GrPC} that in many cases the number of handles necessary to produce an oriented nullcobordism of a compact manifold can be bounded linearly in the number of handles of the manifold itself.  We have only been able to verify this assertion for framed manifolds in dimension $\not\equiv 3 \mod 4$.\footnote{This is a consequence of the fact that the main theorems of surgery theory are proved by a greedy method of improvement, special facts about quadratic forms over $\mathbb Z$, and the theorem of Kervaire--Milnor showing that the number of differentiable structures on the $n$-sphere is finite.}  On the other hand, for manifolds with fundamental group $\mathbb Z/2\mathbb Z$, one can use the $\rho$-invariant to give a lower bound on the number of handles in a bordism over $\ZZ/2\ZZ$, and therefore deduce that the number of handles in such a nullcobordism cannot be bounded at all by the number of handles in the manifold.  We investigate this further in a joint article with Bena Tshishiku \cite{morse}.
\end{rmk}

\section{Efficient Whitney embeddings}

In this section we describe a way of embedding a PL $k$-manifold efficiently into
$\mathbb{R}^n$, where $n \geq 2k+1$, implementing step (1) of the proof outline.

In \cite{GrGu}, Gromov and Guth describe ``thick'' embeddings of $k$-dimensional
simplicial complexes in unit $n$-balls, for $n \geq 2k+1$.  They define the
\emph{thickness} $T$ of an embedding to be the maximum value such that disjoint
simplices are mapped to sets at least distance $T$ from each other.  In
\cite[Thm.~2.1]{GrGu}, given a complex with volume $V$ and bounded geometry of type $L$, they construct embeddings with a lower bound on thickness whose asymptotic
behavior as a function of $V$ is close to the theoretical upper bound.

Their construction is probabilistic: they first produce a random embedding, in
which some simplices may pass too close to each other.  They then bend the
simplices around each other on a smaller scale to resolve these near-collisions.

Gromov and Guth's result was strengthened in \cite[Appendix A \S2]{CDMW}, using
essentially the same method, to give the same bound with respect to a slightly
stronger notion of bounded geometry.  We give a version of this strengthened result here.  Denote the link of a simplex $\sigma$ by $\lk\sigma$.
\begin{thm}[{based on \cite[Appendix A, Theorem 2.1]{CDMW}}] \label{thm:emb}
  Suppose that $X$ is a $k$-dimensional simplicical complex with $V$ vertices
  and each vertex lying in at most $L$ simplices.  Suppose that $n \geq 2k+1$.
  Then there are constants $C(n,L)$ and $\alpha(n,L)>0$ and a subdivision $X'$
  of $X$ which embeds linearly into the $n$-dimensional Euclidean ball of radius
  \[R \leq C(n,L)V^{\frac{1}{n-k}}(\log V)^{2k+2}\]
  such that:
  \begin{enumerate}[(i)]
  \item The embedding has Gromov--Guth thickness 1.
  \item For any $i$-simplex $\sigma$ of $X'$, the induced embedding
    $\lk\sigma \to S^{n-i-1}$ into the round unit sphere has Gromov--Guth
    thickness $\alpha(n,L)$.
  \item Adjacent vertices of $X'$ are mapped to points at distance
    $\leq \ell(n,L)$.
  \item The vertices of the embedding are ``snapped to a grid'': they are points
    in a lattice in $\mathbb{R}^n$ which depends only on $L$.
  \end{enumerate}
\end{thm}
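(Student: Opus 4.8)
The plan is to import the probabilistic construction of Gromov--Guth, as refined in \cite[Appendix A]{CDMW}, for conclusions (i)--(iii), and then add a single ``rounding'' step to obtain (iv). Recall the shape of that construction: first subdivide $X$ finely enough that every simplex is combinatorially small, then choose a map that is linear on each simplex of $X'$ by placing the images of the vertices at random points of a fine grid inside the ball of radius $R$, with $R$ tuned to $V^{1/(n-k)}$ up to the stated polylogarithmic factor. Since $n \geq 2k+1$, a generic such map is an embedding; the quantitative content is that with probability bounded away from $0$ one can, by local ``finger move'' modifications supported in disjoint small balls, push apart every pair of simplices that lands closer than a fixed constant, without spoiling the volume bound. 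The expected number of such near-collisions, and the combinatorial cost of resolving each one, are controlled because bounded geometry of type $L$ bounds the number of simplices meeting any vertex --- hence the number of ``local events'' charged to any one simplex --- by a function of $L$ alone.

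First I would check that this construction already yields (ii). The induced embedding of $\lk\sigma$ into the unit normal sphere $S^{n-i-1}$ records the directions along which the simplices containing $\sigma$ emanate; in the random model these directions are nearly independent and uniformly distributed, so since each link contains at most $L$ simplices and there are $O_L(V)$ links in all, a first-moment union bound forces all of the link embeddings to be $\alpha(n,L)$-thick simultaneously with probability close to $1$. One must also see that the finger moves can be performed so as to respect link thickness at every simplex in their support; since each is a local surgery of bounded combinatorial size on a bounded-geometry complex, it is enough to fix in advance a finite library of finger-move templates, each itself embedded with thick links, and to glue them in ``radially straight'' near the pair of simplices being separated. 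Conclusion (iii) is then automatic: the whole construction lives at unit scale, so once $X$ has been subdivided finely enough each edge of $X'$ lies in a single grid block and its endpoints are $O(1)$ apart, and the finger moves only shrink the relevant length scale.

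It remains to arrange (iv). After the previous steps we have an embedding of $X'$, linear on each simplex, with Gromov--Guth thickness $\geq 1$, every link thickness $\geq \alpha(n,L)$, edge lengths $\leq \ell(n,L)$, and all vertices at grid points of some fixed spacing. Now snap each vertex to the nearest point of a sublattice $\Lambda = \Lambda(n,L)$ chosen fine enough that no vertex moves by more than $\delta$, where $\delta$ is a small multiple of $\min\{1,\alpha(n,L)\}$ divided by a dimensional constant. On each simplex the map is Lipschitz with constant depending only on $\ell(n,L)$ and the minimum edge length (also $\Theta_L(1)$), so moving the vertices by $\delta$ changes every distance between points of disjoint simplices by $O(\delta)$ and perturbs each link embedding by $O(\delta/\alpha)$ in the relevant metric. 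For $\delta$ small this keeps the map an embedding, keeps disjoint simplices at distance $\geq 1/2$, and preserves every link thickness up to a constant factor; a bounded rescaling restores the normalizations of (i)--(iii), and since the rescaled $\Lambda$ depends only on $n$ and $L$, (iv) holds.

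The main obstacle is the one inherited from the Gromov--Guth/CDMW argument: performing all of the near-collision resolutions at once so that resolutions implicating a common simplex do not interfere, so that each one preserves global thickness and every link thickness throughout its support, and so that the accumulated combinatorial cost does not damage the radius bound $R = O(V^{1/(n-k)}(\log V)^{2k+2})$. Bounded geometry is exactly what makes this manageable, since it caps the number of resolutions charged to any single simplex by a function of $L$; the extra bookkeeping demanded by (ii)--(iv) is mild by comparison, requiring only that a fixed finite collection of local models be chosen with good geometry from the start.
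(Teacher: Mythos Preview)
Your proposal is correct and follows essentially the same route as the paper: import (i)--(iii) from the Gromov--Guth/CDMW construction, then snap vertices to a lattice fine enough that the resulting perturbation is dominated by the existing thickness margins. The only tactical difference is that the paper, rather than snapping and then rescaling to restore the normalizations, first arranges the embedding to have thickness $1+\alpha(n,L)$ and link thickness $\alpha(n,L)$, and then snaps to a cubic lattice of side $\alpha(n,L)/4\sqrt{n}$; each vertex moves by at most $\alpha(n,L)/8$, so the global thickness drops to at least $1$ and the link thickness to at least $\alpha(n,L)/2$, with no rescaling needed. The paper also simply cites \cite{CDMW} for (i)--(ii) rather than re-sketching the probabilistic argument, and obtains (iii) by one further bounded subdivision (the CDMW simplices come out bilipschitz to equilateral simplices at scale $(\log V)^{2k+2}$, not at scale $1$).
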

\begin{rmks}
  \begin{enumerate}[(a)]
  \item When $X$ is a PL $k$-manifold, an embedding is \emph{locally flat} if it
    is locally PL homeomorphic to the standard embedding of $\mathbb{R}^k$ in
    $\mathbb{R}^n$.  By the Zeeman unknotting theorem, this is always true
    unless $n-k=2$.
  \item Conditions (i) and (iii) taken together imply that each simplex is
    $b$-bilipschitz to a standard simplex for some $b=b(n,L)$.
  \item Together with condition (iv), this means that the simplices in the
    embedding come from a finite number of isometry types depending only on $n$,
    $k$ and $L$.
  \end{enumerate}
\end{rmks}
\begin{proof}
  The cited theorem gives an embedding of $X$ satisfying conditions (i) and
  (ii).  Moreover, the proof given in \cite{CDMW} produces a subdivision whose
  simplices are $b(n,L)$-bilipschitz to an equilateral simplex whose edges have
  length $(\log V)^{2k+2}$.  A further subdivision gives simplices which are
  bilipschitz to the standard simplex at scale $1$.  In particular, this gives
  condition (iii).

  To see that condition (iv) can be satisfied, first take an embedding that
  satisfies conditions (i)--(iii) and rescale it to have Gromov--Guth thickness
  $1+\alpha(n,L)$ (thus increasing the constant $\ell(n,L)$).  Now move all the
  vertices to the nearest point of a cubic lattice of side length
  $\lambda=\alpha(n,L)/4\sqrt{n}$, extending the map linearly to the rest of
  the complex.  Then each vertex, and therefore each point of the complex,
  moves by at most $\alpha(n,L)/8$, so the resulting complex still has
  Gromov--Guth thickness at least $1$.  Moreover, since vertices are at least
  $1$ apart, the angular movement of each point in a link is at most
  $\alpha(n,L)/2$, and therefore the thickness of links in the resulting
  complex is still at least $\alpha(n,L)/2$.  Thus we can accommodate condition
  (iv) by at worst halving $\alpha(n,L)$.
\end{proof}

\section{Simplicial Pontrjagin--Thom constructions} \label{S:thom}

Since cobordism theories are cohomology theories, we can represent a cobordism
class by a map to a classifying space and a nullbordism by a nullhomotopy of
such a map.  It is somewhat difficult to describe such classifying spaces in an
explicit way, although \cite[\S II.5]{BRS} and \cite{LR} provide possible
approaches.  In this section, starting with a category $\mathcal M$ of PL
manifolds with prescribed singularities (in other words, of pseudomanifolds with
a prescribed set of permissible links), we develop a ``Pontrjagin--Thom
construction'': a space $\Th\mathcal M(n,k)$ such that for $n$ sufficiently
larger than $k$, $\mathcal M$-bordism classes of spaces in $\mathcal M$
correspond to homotopy classes of maps $S^n \to \Th\mathcal M(n,k)$.  Note that
we do not guarantee that this mapping is bijective, but merely injective:
$\pi_n(\Th\mathcal M(n,k))$ may contain classes that are not in the image of the
bordism group $\Omega^{\mathcal M}_k$.  Therefore the spaces we construct are not
explicit classifying spaces for $\mathcal M$-bordism; we fail to answer,
e.g.,~\cite[\S8,~Question~2]{Goresky}.  Nevertheless, this construction is
sufficient for our purpose.

\subsection{Classes of singularities} \label{S:singularities}
We first specify the necessary conditions on the category $\mathcal M$ that are required for bordism to make sense.  These axioms are similar to those given in \cite[\S IV.3]{BRS}.  A category of manifolds-with-singularity is specified by a family $\{\mathcal L_n\}_{n \in \mathbb N}$ of sets of $(n-1)$-dimensional simplicial complexes which constitute permissible links of vertices in a closed $n$-dimensional $\mathcal M$-manifold.  These sets must satisfy:
\begin{enumerate}
\item $\mathcal L_n$ is closed under PL isomorphism.
\item \label{A:lower} Each member of $\mathcal L_n$ is a closed $\mathcal M$-manifold (that is, its links are contained in $\mathcal L_{n-1}$).
\item \label{A:susp} If $\LL \in \mathcal L_n$, then the suspension
  $S\LL \in \mathcal L_{n+1}$.
\item If $\LL \in \mathcal L_n$, then the cone $c\LL \notin \mathcal L_{n+1}$.
\end{enumerate}
The last axiom means that there is a notion of $\mathcal M$-manifold with boundary and that the boundary of such an object is well-defined.  Namely, an $n$-dimensional $\mathcal M$-manifold with boundary is a simplicial complex whose links lie in $\mathcal L_n$ or $c\mathcal L_{n-1}$, and its boundary is the subcomplex of points whose links lie in $c\mathcal L_{n-1}$.  Axiom \ref{A:susp} ensures that if $M$ is an $\mathcal M$-manifold, then $M \times I$ is a $\mathcal M$-manifold with boundary.  Axiom \ref{A:lower} implies, among other things, that the link of a $k$-simplex of an $\mathcal M$-manifold lies in $\mathcal L_{n-k}$.

An additional ``regularity'' axiom, which is not strictly necessary, guarantees that $\mathcal M$-manifolds are pseudomanifolds:
\begin{enumerate}[resume]
\item $\mathcal L_1=\{S^0\}$.
\end{enumerate}

\subsection{Thom spaces}
The construction of the Thom space is straightforward: it is patched together out of pieces which encode possible local behaviors of a $k$-dimensional $\mathcal M$-manifold embedded simplexwise linearly in $\mathbb R^n$.  To be precise, $\Th\mathcal M(n,k)$ is a semi-simplicial complex glued out of subcomplexes $K_{\LL,f,N}$ (which we call \emph{vertex gadgets}) corresponding to triples $(\LL,f,N)$, where:
\begin{itemize}
\item $\LL$ is a permissible link in $\mathcal M$.
\item $f:c\LL \to \mathbb R^n$ is a linear embedding of the cone on $\LL$.  Let $p_0$ be the cone point of $c\LL$.
\item $N$ is a neighborhood of $f(p_0)$ which is equipped with a linear triangulation transverse to $f$ and such that $f^{-1}(N) \cap \LL=\emptyset$.
\item For every $k$-simplex $\Delta \subset \LL$, the subcomplex $N_{c\Delta} \subseteq N$ defined by
  \[N_{c\Delta}=\bigcup \{\text{simplices }\sigma \subset N \mid \emptyset \neq f^{-1}(\sigma) \subseteq \Int\st(c\Delta \subset cP)\}\]
  satisfies $N_{c\Delta} \cap f(c\Delta) \neq \emptyset$.  In other words, there is at least one simplex of $N$ which intersects $f(c\Delta)$ nontrivially and does not intersect $f(c\Delta')$ for any simplex $\Delta' \subset \LL$ which does not contain $\Delta$.  
\end{itemize}
\begin{figure}
  \centering
  \begin{subfigure}[b]{0.45\textwidth}
    \centering
  \begin{tikzpicture}[z={(0.0,1.0)},x={(0.9,0.4)},y={(-0.75,0.85)}]
    \coordinate (A) at (0,-1,0);
    \coordinate (B) at (2,-0.5,0.2);
    \coordinate (C) at (1.5,0.5,0);
    \coordinate (D) at (1,1,1);
    \coordinate (E) at (-1.6,1,1);
    \coordinate (F) at (-1,0.5,0);
    \coordinate (G) at (-2.2,-0.2,0.2);
    \coordinate (O) at (0,0,0);
    \filldraw[fill=gray!20!white, very thick] (A)--(B)--(C)--(D)--(E)--(F)--(G)--cycle;
    \draw[very thick] (A)--(O)--(B) (C)--(O)--(D) (E)--(O)--(F) (O)--(G);
    \coordinate (X1) at (-.913,-0.3,0.083);
    \coordinate (X2) at (-.85,.1,.05);
    \coordinate (X3) at (-.8,.46,.3);
    \coordinate (X4) at (-.7,.6,.6);
    \coordinate (X5) at (0.1,0.7,0.7);
    \coordinate (X6) at (0.3,0.702,0.702);
    \coordinate (X6pt5) at (0.9,-0.45,.09);
    \coordinate (X7) at (0.8,-0.5,0.08);
    \coordinate (X8) at (0.42,-.53,.042);
    \coordinate (X8pt5) at (-.22,-.52,.02);
    \coordinate (X9) at (-.44,-.5,.04);
    \coordinate (Y1) at (-.7,-.3,.6);
    \coordinate (Y2) at (-.5,.3,.8);
    \coordinate (Y3) at (.8,.4,.9);
    \coordinate (Y4) at (1.1,-.02,0.7);
    \coordinate (Y5) at (0.7,-0.2,.7);
    \coordinate (Y6) at (0,-0.2,.7);
    \coordinate (Y7) at (0,-0.4,0.3);
    \fill[fill=gray!50!white, opacity=0.8] (X1) -- (-.88,-.08,.08) -- (X2) -- (-.963,.4815,0) -- (X3) -- (-.723,.452,.452) -- (X4) -- (X5) -- (X6) -- (Y3) -- (Y4) -- (X6pt5) -- (X7) -- (X8) -- (0,-.536,0) -- (X8pt5) -- (X9) -- cycle;
    \draw (X1) -- (Y1) -- (X9)
      (X8) -- (Y7) -- (Y1) -- (Y2) -- (X2)
      (X3) -- (Y2) -- (X4)
      (X5) -- (Y2) -- (Y3) -- (Y6) -- (Y2)
      (Y1) -- (Y6) -- (Y5) -- (X7)
      (Y3) -- (Y5) -- (Y4) -- (X6pt5)
      (Y5) -- (Y7) -- (Y6) (Y7) -- (X8pt5)
      (Y4) -- (Y3) -- (X6);
  \end{tikzpicture}
  \caption{Here, $f(cP)$ is light gray with thick lines and
    $N$ is dark gray with thin lines.}
\end{subfigure}
\quad
\begin{subfigure}[b]{0.45\textwidth}
  \centering
  \begin{tikzpicture}z={(0.0,1.0)},x={(0.9,0.4)},y={(-0.75,0.85)}]
    \coordinate (A) at (0,-1,0);
    \coordinate (B) at (2,-0.5,0.2);
    \coordinate (C) at (1.5,0.5,0);
    \coordinate (D) at (1,1,1);
    \coordinate (E) at (-1.6,1,1);
    \coordinate (F) at (-1,0.5,0);
    \coordinate (G) at (-2.2,-0.2,0.2);
    \coordinate (O) at (0,0,0);
    \filldraw[fill=gray!20!white, very thick] (A)--(B)--(C)--(D)--(E)--(F)--(G)--cycle;
    \draw[very thick] (A)--(O)--(B) (C)--(O)--(D) (E)--(O)--(F) (O)--(G);
    \coordinate (X1) at (-.913,-0.3,0.083);
    \coordinate (X2) at (-.85,.1,.05);
    \coordinate (X3) at (-.8,.46,.3);
    \coordinate (X4) at (-.7,.6,.6);
    \coordinate (X5) at (0.1,0.7,0.7);
    \coordinate (X6) at (0.3,0.702,0.702);
    \coordinate (X6pt5) at (0.9,-0.45,.09);
    \coordinate (X7) at (0.8,-0.5,0.08);
    \coordinate (X8) at (0.42,-.53,.042);
    \coordinate (X8pt5) at (-.22,-.52,.02);
    \coordinate (X9) at (-.44,-.5,.04);
    \coordinate (Y1) at (-.7,-.3,.6);
    \coordinate (Y2) at (-.5,.3,.8);
    \coordinate (Y3) at (.8,.4,.9);
    \coordinate (Y4) at (1.1,-.02,0.7);
    \coordinate (Y5) at (0.7,-0.2,.7);
    \coordinate (Y6) at (0,-0.2,.7);
    \coordinate (Y7) at (0,-0.4,0.3);
    \fill[fill=gray!50!white, opacity=0.8] (X1) -- (-.88,-.08,.08) -- (X2) -- (-.963,.4815,0) -- (X3) -- (-.723,.452,.452) -- (X4) -- (X5) -- (X6) -- (Y3) -- (Y4) -- (Y5) -- (Y6) -- (Y1) -- cycle;
    \fill[fill=gray!80!white, opacity=0.8] (X1) -- (Y1) -- (Y6) -- (Y5) -- (Y4) -- (X6pt5) -- (X7) -- (X8) -- (0,-.536,0) -- (X8pt5) -- (X9) -- cycle;
    \draw (X1) -- (Y1) -- (X9)
      (X8) -- (Y7) -- (Y1) -- (Y2) -- (X2)
      (X3) -- (Y2) -- (X4)
      (X5) -- (Y2) -- (Y3) -- (Y6) -- (Y2)
      (Y1) -- (Y6) -- (Y5) -- (X7)
      (Y3) -- (Y5) -- (Y4) -- (X6pt5)
      (Y5) -- (Y7) -- (Y6) (Y7) -- (X8pt5)
      (Y4) -- (Y3) -- (X6);
    \node[inner sep=1.3pt,circle,draw=black,fill=black,thick] at (A) {};
    \node[right] at (A) {$v$};
  \end{tikzpicture}
  \caption{Now we highlight a vertex $v$ and the corresponding subcomplex
    $N_{cv}$.}
\end{subfigure}
\caption{Schematic illustrations of $f(cP)$, $N$ for a typical triple $(P,f,N)$.}
\end{figure}
We identify two triples $(\LL,f,N)$ and $(\LL,f',N')$ if $f$ and $N$ differ from $f'$ and $N'$ by the same translation.

Given such a triple, let $N_\infty \subset N$ be the subcomplex consisting of simplices disjoint from $f(c\LL)$.  Then the vertex gadget $K_{\LL,f,N}$ is isomorphic to the semi-simplicial complex $N/N_\infty$, and $\Th\mathcal M(n,k)$ is glued out of all the vertex gadgets using the following gluing relations:
\begin{itemize}
\item All the points resulting from collapsing various $N_\infty$ are identified (this is the ``point at infinity'' in the Thom space).
\item Given $K_{\LL,f,N}$ and $K_{\LL',f',N'}$, we identify the subcomplexes
  \[N_{c\Delta} \subset K_{\LL,f,N} \quad\text{and}\quad N'_{c\Delta'} \subset K_{\LL',f',N'}\]
  if (1) $N_{c\Delta}=N'_{c\Delta'}$ as complexes embedded in $\mathbb R^n$ and
  (2) there is an isomorphism $\iota:\st(c\Delta) \to \st(c\Delta')$ such that
  $f' \circ \iota = f$.
\end{itemize}

Optionally, we can make this into a Thom space for oriented $\mathcal M$-manifolds by adding orientation data for top-dimensional simplices of $c\LL$ to the triple $(\LL,f,N)$, and require that this data match in order to identify subcomplexes.

Notice that $\Th\mathcal M(n,k)$ occurs as a subcomplex in $\Th\mathcal M(n+1,k+1)$.  This is because for any vertex gadget $K_{\LL,f,N}$ of $\Th\mathcal M(n,k)$, $\Th\mathcal M(n+1,k+1)$ contains a vertex gadget $K_{\LL',f',N'}$ corresponding to the triple
\[(\LL'=\partial(c\LL \times [0,1]), f':(x,t) \mapsto (f(x),3t-1), N'=N \times [0,1]),\]
with $\LL'$ and $N'$ triangulated via a subdivision of the product cell structure so that:
\begin{enumerate}
\item The inclusions of $c\LL \times 0$ and $N \times 0$ are simplicial.
\item All instances of $\Delta^k \times [0,1]$ are subdivided in the same
  (permutation-independent) way.
\end{enumerate}
Then $K_{\LL,f,N}$ includes into $K_{\LL',f',N'}$ as the subcomplex $K_{cv}$ for
\[v=(\text{cone point},0) \in \partial(c\LL \times [0,1]).\]
These inclusions respect the gluings that produce the complexes as a whole.  If using oriented simplices, we require that $c\LL$ face in the positive direction in $c\LL \times [0,1]$.

Finally, we must show that these complexes are indeed Thom spaces in the sense we need:
\begin{prop} \label{prop:thom}
  When $n>2k+1$, there is an injective ``Pontrjagin--Thom'' homomorphism
  \[\Omega^{\mathcal M}_k \to \pi_n(\Th\mathcal M(n+1,k+1))\]
  where $\Omega^{\mathcal M}_k$ is the bordism group of $k$-dimensional
  $\mathcal M$-manifolds.  Moreover, $\Th\mathcal M(n,k))$ is
  $(n-k-1)$-connected.
\end{prop}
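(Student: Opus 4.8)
The plan is to establish all four assertions of Proposition~\ref{prop:thom} by a singular analogue of the Pontrjagin--Thom correspondence, with the combinatorial structure designed into $\Th\mathcal M(n,k)$ doing the work that the Thom class and a tubular neighborhood do classically. The first step is bookkeeping: I would single out a \emph{core} subcomplex $Z\subseteq\Th\mathcal M(n,k)$, assembled from the part of each $N$ that records $f(c\LL)$ inside the pieces $K_{\LL,f,N}=N\cup cN_\infty$, and record two structural facts. (a) Near a nonsingular point $\Th\mathcal M(n,k)$ looks like $\mathbb R^k\times(D^{n-k}/\partial D^{n-k})$ with $Z$ the subspace $\mathbb R^k\times\{0\}$; in particular $\dim Z=k$ and $Z$ has codimension $n-k$. (b) The cone structures on the $cN_\infty$ fit together into a deformation retraction of $\Th\mathcal M(n,k)\setminus Z$ onto the point at infinity. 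Both are direct consequences of the defining conditions on the triples $(\LL,f,N)$---especially the last bullet, which says precisely that $N$ projects nicely onto the cells dual to $f(c\LL)$---together with the suspension axiom on $\{\mathcal L_n\}$.

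Next I would build the map. Given a closed $k$-dimensional $\mathcal M$-manifold $X$, Theorem~\ref{thm:emb} embeds a subdivision of $X$ into $\mathbb R^n\subseteq\mathbb R^{n+1}$; choosing a regular neighborhood with a triangulation transverse to $X$, the local picture around each vertex $v$ of $X$ is exactly a triple $(\lk(v,X),f_v,N_v)$. The usual collapse map (the regular neighborhood to itself, everything else to infinity) is then assembled---via the same identifications $K_v=K_{v'}$ used to glue $\Th\mathcal M(n,k)$---into a simplicial map $c_X\colon S^n\to\Th\mathcal M(n,k)\hookrightarrow\Th\mathcal M(n+1,k+1)$ with $c_X^{-1}(Z)=X$. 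Because $n>2k+1$, any two embeddings of $X$ into $\mathbb R^n$ are ambient isotopic, and regular neighborhoods and transverse triangulations are unique up to isotopy and subdivision; each such move alters $c_X$ only by a homotopy, so $[c_X]\in\pi_n(\Th\mathcal M(n+1,k+1))$ depends only on $X$. Disjoint unions embed into disjoint balls, so $[X]\mapsto[c_X]$ is a homomorphism, and in the oriented case one carries the orientation data on top simplices along verbatim.

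To see that this descends to $\Omega^{\mathcal M}_k$ and is injective I would run the relative construction both ways. If $W^{k+1}$ is an $\mathcal M$-manifold with $\partial W=X$, then since $n+1\geq2(k+1)+1$ we may embed $W$ properly in $D^{n+1}$ with $\partial W=X\subseteq S^n=\partial D^{n+1}$ sitting in a collar; the collapse map of a transversally triangulated regular neighborhood of $W$ is a map $D^{n+1}\to\Th\mathcal M(n+1,k+1)$ restricting on $S^n$ to $c_X$, so $[c_X]=0$ and the map factors through bordism. Conversely, suppose $c_X$ extends to $H\colon D^{n+1}\to\Th\mathcal M(n+1,k+1)$; after subdividing $D^{n+1}$ we may take $H$ simplicial, and then $W:=H^{-1}(Z)$ is a subcomplex of dimension $(n+1)-(n-k)=k+1$ whose star--link structure is pinned down by the defining data of the triples. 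The main obstacle will be the verification that $W$ is genuinely an $\mathcal M$-manifold with $\partial W=c_X^{-1}(Z)=X$: one has to check, simplex by simplex, that the link of every simplex of $W$ is forced into $\mathcal L_{k+1}$ away from $X$ and into $c\mathcal L_k$ along $X$, which is exactly what the bullet conditions on $(\LL,f,N)$ and the suspension/cone axioms on $\{\mathcal L_n\}$ are arranged to guarantee. Granting this lemma, $W$ is a nullbordism of $X$, so $[X]\mapsto[c_X]$ is injective; note this is also the extraction carried out in step~(4) of the proof outline.

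The connectivity statement is then immediate from fact~(b). For $j\leq n-k-1$ and a map $g\colon S^j\to\Th\mathcal M(n,k)$, make $g$ simplicial; since its image is a subcomplex of dimension $\leq j<n-k=\operatorname{codim}Z$, a general-position homotopy pushes $g$ entirely off $Z$, hence into $\Th\mathcal M(n,k)\setminus Z$, which deformation retracts onto the point at infinity. Therefore $\pi_j(\Th\mathcal M(n,k))=0$ for all $j\leq n-k-1$, i.e.\ $\Th\mathcal M(n,k)$ is $(n-k-1)$-connected. In writing this up I would prove facts (a) and (b) and the ``$H^{-1}(Z)$ is an $\mathcal M$-manifold'' lemma first; after that all four conclusions are short, and the local analysis in that lemma will be reused later when we control the geometry of the filling $W$.
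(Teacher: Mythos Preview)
Your approach is essentially the paper's: construct the collapse map from an embedding with a suitably triangulated regular neighborhood, build a left inverse by taking preimages of the zero section (your $Z$ is the paper's $\Th_0\mathcal M(n,k)$), and deduce injectivity from the retraction. The paper fills in one step you gloss over---it proves a short lemma guaranteeing that a sufficiently fine transverse triangulation of $\mathbb R^n$ produces pieces $N_v$ which cover $f(M)$ and satisfy the triple conditions, invoking the Knaster--Kuratowski--Mazurkiewicz lemma for the last bullet---and it handles connectivity more simply than you do: rather than asserting your fact~(b) and using general position, it just observes that by a dimension count the $(n-k-1)$-skeleton of each $N$ misses $f(c\LL)$ and hence lies in $N_\infty$, so the entire $(n-k-1)$-skeleton of $\Th\mathcal M(n,k)$ retracts to the basepoint.
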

\begin{proof}
  Given an $\mathcal M$-manifold $X$, we can construct a PL embedding
  $f:X \to \mathbb R^n$.  We then construct a suitable triangulation of
  $\mathbb R^n$ by letting $A=f(X)$ in the following lemma.
  \begin{lem} \label{lem:fine}
    Let $A$ be a finite simplicial complex linearly embedded in $\mathbb R^n$.
    There is a linear triangulation $\tau$ of $\mathbb R^n$ which is transverse
    to $A$ and has the following property.  For a vertex $v \in A$, define the
    subset
    \[N_v=\bigcup \{\Delta \in \tau : \emptyset \neq \Delta \cap A \subseteq \Int\st(v)\}.\]
    Then the sets $N_v$ cover $A$.
  \end{lem}
  \begin{proof}
    Let $\bs(A)$ be the barycentric subdivision of $A$.  It suffices for $\tau$
    to be such that if a simplex $\Delta$ of $\tau$ intersects the star in
    $\bs(A)$ of a vertex $v$, then $\tau \cap A$ is contained in the open star
    of $v$ in $A$.  This is true for any sufficiently fine triangulation.
  \end{proof}
  Now suppose we have such a triangulation.  Then for any simplex $\Delta$ of
  $X$,
  \[N_\Delta=\bigcap_{v \in \Delta} N_v\]
  satisfies the following conditions:
  \begin{itemize}
  \item $f^{-1}(N_\Delta)$ is contained in $\Int(\st(\Delta))$, and in fact for
    each $v \in \Delta$, $N_\Delta$ is the maximal subcomplex of $N_v$ for which
    this holds.
  \item $N_\Delta \cap f(\Delta) \neq \emptyset$ (by the
    Knaster--Kuratowski--Mazurkiewicz lemma, since for any simplex
    $\Sigma \subset \Delta$, $\bigcup_{v \in \Sigma} N_v$ covers $f(\Sigma)$).
  \end{itemize}
  Therefore, there is a well-defined map $\mathbb R^n \to \Th\mathcal M(n,k)$
  which sends each $N_v$ to the subcomplex $K_{\lk(v),f|_{\st(v)},N_v}$, and
  simplices disjoint from $f(X)$ to the point at infinity.  This map sends a
  simplex $\sigma'$ of $\tau$ which intersects $f(X)$ nontrivially to the
  corresponding simplex $\sigma'$ of $K_{\lk(v),f|_{\st(v)},N_v}$, and every other
  simplex to the point at infinity.  This shows that every $k$-dimensional
  $\mathcal M$-manifold induces a corresponding element of
  $\pi_n(\Th \mathcal M(n+1,k+1))$.

  Similarly, if $X$ is the boundary of a $(k+1)$-dimensional
  $\mathcal M$-manifold $W$, we can extend the embedding $f$ to an embedding
  $W \to \mathbb R^n \times [0,1]$ and use this embedding to construct a
  nullhomotopy $D^{n+1} \to \Th\mathcal M(n+1,k+1)$ of $f$.  This shows that the
  correspondence is well-defined on bordism classes. 

  Conversely, given a simplicial map $g:S^n \to \Th\mathcal M(n+1,k+1)$, we can
  recover a (possibly empty) $k$-dimensional $\mathcal M$-manifold.  Notice that
  $\Th \mathcal M(n,k)$ contains a ``zero section''
  \[\Th_0\mathcal M(n,k)=\bigcup_{\LL,f,N} (f(cP) \cap N \subset K_{\LL,f,N}).\]
  \begin{lem}
    $X=g^{-1}(\Th_0\mathcal M(n+1,k+1))$ is an $\mathcal M$-manifold.
  \end{lem}
  \begin{proof}
    Let $x$ be a point in $X \cap \Int(g^{-1}(\sigma))$, where $\sigma$ is an
    $(n-j)$-simplex of $\Th\mathcal M(n+1,k+1)$.  In order to show that $X$ is
    an $\mathcal M$-manifold, we need to understand the link of $x$ in $X$.
    The simplex $\sigma$ is contained in some set of vertex gadgets
    $K_{\LL,f,N}$, and the link of $g(x)$ in $\sigma \cap \Th_0\mathcal M(n,k)$
    is
    \[\bigcup_{K_{\LL,f,N} \supset \sigma} (\text{link of $g(x)$ in }f(c\LL) \cap N).\]
    For each choice of $(\LL,f,N)$, $g(x)$ is contained in the interior of an
    $\ell$-simplex $\Delta$ (where $\ell \geq j+1$) of $f(c\LL)$ whose link $L$
    in $f(c\LL)$ is, by the definition of the gluing relation, independent of
    the choice of $(\LL,f,N)$.  Moreover, by Axiom \ref{A:lower} for admissible
    links in $\mathcal M$, $L$ is an admissible link for
    $(k+1-\ell)$-dimensional $\mathcal M$-manifolds.

    We claim that the link of $x$ in $X$ is the $(\ell-1)$-fold suspension of
    $L$.  Since this is true for every $x \in X$, $X$ is an
    $\mathcal M$-manifold.

    To see the claim, notice that since $f(c\LL)$ is transverse to $N$, for any
    simplex $\sigma'$ containing $x$, there is a neighborhood of $x$ in
    $X \cap \sigma'$ which is foliated by copies of $cL$.  This is true
    regardless of the dimensions of the simplices $\sigma'$ and $g(\sigma')$.
  \end{proof}
  Similarly, for a simplicial map $D^{n+1} \to \Th\mathcal M(n+1,k+1)$, the
  preimage of the zero section is a $(k+1)$-dimensional $\mathcal M$-manifold
  with boundary.

  We have now constructed well-defined maps
  \[\Omega^{\mathcal M}_k \to \pi_n(\Th\mathcal M(n+1,k+1)) \to \Omega^{\mathcal M}_k\]
  whose composition is the identity.  Therefore the first of these maps is
  injective.

  Now notice that for every subcomplex $K_{(\LL,f,N)}$ of $\Th\mathcal M(n,k)$,
  the $(n-k-1)$-skeleton of $N$ does not intersect the image of $f$, that is,
  it is contained in $N_\infty$.  Since all the $N_\infty$ are collapsed to the
  point at infinity in $\Th\mathcal M(n,k)$, its $(n-k-1)$-skeleton is a point,
  and so it is $(n-k-1)$-connected.
\end{proof}


\subsection{Modifications for bordism homology} \label{S:addY}
Here we describe the modifications to the construction of $\Th\mathcal M(n,k)$
required to encode bordism classes of maps from $\mathcal M$-manifolds to a
simplicial complex $Y$.  We define a space $(\Th \wedge Y)\mathcal M(n,k)$ which is built out of pieces encoding possible local behaviors of a $k$-dimensional $\mathcal M$-manifold embedded simplexwise linearly in $\mathbb R^n$ \emph{and additionally equipped with a map to $Y$}.  We construct the space out of the same vertex gadgets as before, but give them an additional index: the vertex gadget $K_{(\LL,f,N,g)}$, where $g:c\LL \to Y$ is a simplicial map, is isomorphic to $N/N_\infty$.  As before, the cone points are all identified as one point at infinity, and we identify $K_{\Delta} \subset K_{(\LL,f,N,g)}$ and $K_{\Delta'} \subset K_{(\LL',f',N',g')}$ if
\begin{enumerate}
\item $N_{c\Delta}=N'_{c\Delta'}$ as complexes embedded in $\mathbb R^n$ and
\item there is an isomorphism $\iota:\st(\Delta) \to \st(\Delta')$ such that
  $f' \circ \iota = f$ \emph{and} $g' \circ \iota=g$.
\end{enumerate}

Note that $(\Th \wedge Y)\mathcal M(n,k)$ comes with a natural projection map
\[p:(\Th \wedge Y)\mathcal M(n,k) \to \Th\mathcal M(n,k)\]
which is finite-to-one if $Y$ is a finite complex.  Let
\[(\Th \wedge Y)_0\mathcal M(n,k)=p^{-1}\Th_0\mathcal M(n,k).\]
\begin{prop} \label{prop:smash}
  When $n>2k+1$, there is an injective map
  \[\Omega^{\mathcal M}_k(Y) \to \pi_n((\Th \wedge Y)\mathcal M(n+1,k+1))\]
  where $\Omega^{\mathcal M}_k(Y)$ is the $k$th $\mathcal M$-bordism group
  of $Y$.  Furthermore, $(\Th \wedge Y)\mathcal M(n,k)$ is
  $(n-k-1)$-connected.
\end{prop}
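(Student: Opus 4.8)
The plan is to follow the proof of Proposition~\ref{prop:thom} almost verbatim, checking at each stage that the extra datum $g : c\LL \to Y$ tags along without obstruction. The key observation is that $(\Th \wedge Y)\mathcal M(n,k)$ is built from exactly the same pieces $K_{(\LL,f,N)}$ as $\Th\mathcal M(n,k)$, just indexed over an additional finite set of simplicial maps, and that all the gluings in the definition only ever identify $g$-data that literally agree on overlapping stars. So the projection $p$ is a simplicial map, and preimages of subcomplexes under $p$ are again built from the $K_v$-pieces.

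\textbf{Construction of the map.} Given an $\mathcal M$-manifold $M$ and a simplicial map $h : M \to Y$ representing a class in $\Omega^{\mathcal M}_k(Y)$, I would first PL-embed $M$ in $\mathbb R^n$, choose a transverse triangulation $\tau$ as in Lemma~\ref{lem:fine}, and then mimic the definition of the Pontrjagin--Thom map from the proof of Proposition~\ref{prop:thom}, except that each $N_v$ is sent to $K_{(\lk(v), f|_{\st(v)}, N_v, h|_{\st(v)})}$ --- i.e.\ we additionally record the restriction of $h$ to the star. Since the $N_v$ cover $f(M)$ and overlap exactly on the $N_\Delta$, on which the corresponding restrictions of $h$ agree by construction, these local maps glue to a well-defined simplicial map $S^n \to (\Th \wedge Y)\mathcal M(n+1,k+1)$ (after composing the face inclusion $\Th\mathcal M(n,k)\subset\Th\mathcal M(n+1,k+1)$, which lifts to $(\Th\wedge Y)$ by carrying the $g$-datum along the obvious product map $\partial(c\LL\times[0,1])\to c\LL\to Y$). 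Extending the embedding of $M$ to an embedding of a nullbordism-with-map $(W,H)$ into $\mathbb R^n\times[0,1]$ and running the same recipe produces a nullhomotopy $D^{n+1}\to (\Th\wedge Y)\mathcal M(n+1,k+1)$, so the assignment descends to $\Omega^{\mathcal M}_k(Y)$.

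\textbf{Injectivity via a retraction.} For the reverse map, note that $(\Th \wedge Y)\mathcal M(n,k)$ contains a zero section $(\Th_0 \wedge Y)\mathcal M(n,k) = p^{-1}(\Th_0\mathcal M(n,k))$, which carries a natural simplicial map to $Y$ (read off the $g$-datum, restricted to the zero section of $c\LL$, which is a point-or-cone; more precisely each piece of the zero section of $K_{(\LL,f,N,g)}$ maps to $Y$ via $g$ composed with the collapse to the cone point, and these agree on overlaps). Given a simplicial $\gamma : S^n \to (\Th \wedge Y)\mathcal M(n+1,k+1)$, set $M = \gamma^{-1}$ of this zero section; by the same link computation as in Proposition~\ref{prop:thom} (which is local and unaffected by the $Y$-label), $M$ is a closed $k$-dimensional $\mathcal M$-manifold, and composing $\gamma|_M$ with the map to $Y$ gives the structure map $h : M \to Y$. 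The analogous construction on $D^{n+1}$ produces a nullbordism-with-map. As before, the composite $\Omega^{\mathcal M}_k(Y) \to \pi_n \to \Omega^{\mathcal M}_k(Y)$ is the identity, so the first arrow is injective. Finally, $(n-k-1)$-connectivity is inherited from $\Th\mathcal M(n,k)$: the $(n-k-1)$-skeleton of each $N$ still misses $f(M)$, hence every $K_{(\LL,f,N,g)}$ has its $(n-k-1)$-skeleton inside $cN_\infty$, whose cone points are all identified, so the whole $(n-k-1)$-skeleton deformation retracts to the point at infinity.

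\textbf{Expected main obstacle.} The routine parts are genuinely routine; the one place requiring care is checking that the $g$-data are mutually consistent under \emph{all} the identifications defining $(\Th\wedge Y)\mathcal M(n,k)$ --- in particular under the face inclusion $(\Th\wedge Y)\mathcal M(n,k)\hookrightarrow(\Th\wedge Y)\mathcal M(n+1,k+1)$ and under the recovery of $h$ from the zero section --- so that the forward and backward maps are honestly well-defined on the nose rather than merely up to homotopy. This amounts to bookkeeping about stars and restrictions, but it is the crux of why the proof works, and I would state it as the analogue of Lemma~\ref{lem:fine} (a transverse triangulation so fine that each $N_v$ sees only one star of $M$, hence only one restriction of $h$) and otherwise let the argument of Proposition~\ref{prop:thom} carry over word for word.
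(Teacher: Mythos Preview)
Your overall strategy matches the paper's: build the Pontrjagin--Thom map by recording the extra datum $g=h|_{\st(v)}$ at each vertex, and prove injectivity by a retraction that reads a map $M\to Y$ off the zero section.  The $(n-k-1)$-connectivity argument and the forward construction are essentially as in the paper (the paper is slightly more careful: since the embedding is linear only on a subdivision $\tau$ of the given triangulation of $M$, one must first homotope $h$ to a map simplicial on $\tau$ before the data $h|_{\st(v)}$ make sense).

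The genuine gap is in your retraction.  You describe the map from the zero section to $Y$ as ``$g$ composed with the collapse to the cone point''; taken literally this sends the entire zero section of each $K_{(\LL,f,N,g)}$ to the single vertex $g(p_0)\in Y$, and these locally constant values do \emph{not} agree on the overlaps $K_v$, so no continuous map results.  Even the more natural guess $g\circ f^{-1}$ on $f(c\LL)\cap N$ is problematic: the zero section is only a polyhedral (not simplicial) subset, and one must check carefully that the local pieces glue under the identifications $K_v\sim K_{v'}$.  The paper handles this by an honest construction: it builds a simplicial map $\tilde g$ from a triangulation of the zero section to the \emph{barycentric subdivision} $\bs(Y)$, by induction on the dimension of simplices $\Delta\subset c\LL$, sending each $N_\Delta$ into the dual cell $g(\Delta)^\vee\subset\bs(Y)$.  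The point of passing to dual cells is exactly that the value on $N_\Delta$ then depends only on $N_\Delta$ and $g|_\Delta$, which is precisely the data that is matched under the gluings, so the local definitions patch.  This dual-cell construction is the main technical content of the proof beyond Proposition~\ref{prop:thom}, and your proposal does not supply it.
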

\begin{proof}
  Given a triangulated $\mathcal M$-manifold $X$ and a simplicial map
  $\ph:X \to Y$, we construct a map $S^n \to \Th\mathcal M(n,k)$ as in
  Proposition \ref{prop:thom}, taking care that the embedding
  $X \hookrightarrow \mathbb R^n$ is linear on a subdivision $\tau$ of our
  triangulation.  To lift this map to $(\Th \wedge Y)\mathcal M(n,k)$, we need
  consistent choices of $g:c\lk(v) \to Y$ for each vertex $v$ of $\tau$.  We
  obtain this by homotoping $\ph$ to a map which is simplicial on $\tau$.  We
  use a similar method to convert a filling of $(X,\ph)$ to a nullhomotopy.
  This gives a well-defined map
  \[\Omega^{\mathcal M}_k(Y) \to \pi_n((\Th \wedge Y)\mathcal M(n+1,k+1)).\]
  It remains to show that this map is injective.  We will do this once again by
  constructing a retraction.

  Given a simplicial map $g:S^n \to (\Th \wedge Y)\mathcal M(n+1,k+1)$, we can
  construct a manifold
  \[X=g^{-1}((\Th \wedge Y)_0\mathcal M(n+1,k+1))\]
  as in Proposition \ref{prop:thom} after forgetting the data about $Y$.  Now we
  need to build a map from this manifold to $Y$ which, when $g$ is the Thom map
  of some $\ph:X \to Y$, agrees up to homotopy with $\ph$.

  We do this by composing the induced map
  $i:X \to (\Th \wedge Y)\mathcal M(n+1,k+1)$ with a map $\tilde g$ from the
  zero section $(\Th \wedge Y)_0\mathcal M(n+1,k+1)$ to $Y$.  This will be a
  simplicial map from a triangulation of the zero section to the one-time
  barycentric subdivision $\bs(Y)$, and it is constructed by induction on
  dimension.  For a simplex $\Delta$ of $Y$, denote by $\Delta^\vee$ the dual
  subcomplex to $\Delta$ in $\bs(Y)$; that is, if $v_\Delta$ is the barycenter
  of $\Delta$, then $\st(v_\Delta)=\Delta^\vee * \partial \Delta$.  For each
  $K_{(P,f,N,g)}$ we build $\tilde g:f^{-1}(N) \to Y$ as follows:
  \begin{itemize}
  \item For each $(k+1)$-simplex $\Delta$ of $c\LL$, set
    $\tilde g(N_\Delta)=v_{g(\Delta)}$.
  \item For each $k$-simplex $\Delta$ of $c\LL$, extend $\tilde g$ to $N_\Delta$
    so that its composition with the projection to $\Delta^\vee$ is homotopic to
    $g$ via a homotopy which restricts to the linear contraction on
    $N_{\Delta'}$ for each of the $(k+1)$-simplices $\Delta'$ incident to
    $\Delta$.  So that $\tilde g$ is well-defined, this map should depend only
    on $N_\Delta$ and $g|_{\st \Delta}$.
  \item Continue this process for lower-dimensional simplices
    $\Delta \subset \LL$, at all points fixing maps in such a way that they
    depend only on $N_\Delta$ and $g|_{\st \Delta}$.
  \end{itemize}
  If we start with $(X,\ph)$, build the corresponding map
  $S^n \to (\Th \wedge Y)\mathcal M(n,k)$, and then use this to construct
  $\tilde g \circ i$ as above, then by construction
  $\tilde g \circ i \simeq \ph$.  Therefore, as in Proposition \ref{prop:thom},
  we have a retraction
  \[\Omega^{\mathcal M}_k(Y) \to \pi_n((\Th \wedge Y)\mathcal M(n+1,k+1)) \to \Omega^{\mathcal M}_k(Y),\]
  and therefore the first map is injective.

  Finally $(\Th \wedge Y)\mathcal M(n,k)$ is $(n-k-1)$-connected by the same
  reasoning as in Proposition \ref{prop:thom}.
\end{proof}

\section{The Thom map and its nullhomotopy}

In this section, we give the details of steps (2) and (3) of our outline: first use the embedding constructed in Theorem \ref{thm:emb} to build a geometrically controlled Thom map to a finite subcomplex of $\Th\mathcal M(n,k)$, and then find a controlled nullhomotopy of this map in a larger finite subcomplex of $\Th \mathcal M(n+1,k+1)$. 
\begin{lem} \label{lem:phi}
  For every $n$, $k$, $L$, and $\mathcal M$, there is a finite, $(n-k-1)$-connected subcomplex $T_{\mathcal M}(n,k,L) \subset \Th\mathcal M(n,k)$ such that the following holds.  Let $X$ be a PL triangulated $k$-dimensional $\mathcal M$-manifold with $V$ $k$-simplices and bounded geometry of type $L$, and let $n \geq 2k+1$.  Then there is a PL map $f:S^n \to T_{\mathcal M}(n,k,L)$ such that $f^{-1}(\Th_0\mathcal M(n,k))$ is PL homeomorphic to $X$ and the Lipschitz constant of $f$ (with respect to the standard simplexwise linear metric on $T_{\mathcal M}(n,k,L)$) is at most
  \[C(n,L)V^{\frac{1}{n-k}}(\log V)^{2k+2}.\]
  Moreover, if $Y$ is a finite simplicial complex and $\ph:X \to Y$ is a
  simplicial map, then $f$ lifts to a PL map
  \[\tilde f:S^n \to p^{-1}T_{\mathcal M}(n,k,L) \subset (\Th \wedge Y)\mathcal M(n,k)\]
  such that $\tilde g \circ \tilde f|_{\tilde f^{-1}\Th_0\mathcal M(n,k)} \simeq \ph$,
  where $\tilde g:(\Th \wedge Y)_0\mathcal M(n,k) \to Y$ is the projection
  defined in the proof of Proposition \ref{prop:smash} and
  $p:(\Th \wedge Y)\mathcal M(n,k) \to \Th\mathcal M(n,k)$ is the projection
  defined at the beginning of \S\ref{S:addY}.
\end{lem}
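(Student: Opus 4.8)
The plan is to run the Pontrjagin--Thom correspondence of Proposition~\ref{prop:thom} through the explicit geometrically controlled embedding supplied by Theorem~\ref{thm:emb}, and to check that the construction of the Thom map from the proof of Proposition~\ref{prop:thom} only ever sees a bounded amount of local data, so that its image lands in a finite subcomplex and it is Lipschitz with the stated constant. First I would apply Theorem~\ref{thm:emb} to $X$, obtaining a subdivision $X'$ embedded linearly in a Euclidean ball of radius $R \leq C(n,L)V^{1/(n-k)}(\log V)^{2k+2}$, with Gromov--Guth thickness $1$, links embedded with thickness $\alpha(n,L)$, adjacent vertices at distance $\leq \ell(n,L)$, vertices snapped to a lattice depending only on $L$, and (by the remarks following that theorem) simplices coming from finitely many isometry types depending only on $n,k,L$. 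Then I would choose a transverse linear triangulation $\tau$ of $\mathbb R^n$ as in Lemma~\ref{lem:fine}; the point is that, because the embedded simplices of $X'$ are uniformly bilipschitz to standard simplices and snapped to a lattice, one can take $\tau$ itself to be a fixed lattice triangulation at a scale depending only on $n,k,L$ (fine enough relative to $\alpha(n,L)$ and $\ell(n,L)$ that the conclusion of Lemma~\ref{lem:fine} holds uniformly). This is the step where I must be careful: I need $\tau$ to be simultaneously transverse to every simplex of $X'$ and fine enough to separate stars, but since there are only finitely many local configurations (isometry types of simplices of $X'$ together with the finitely many ways the link of a vertex can sit, up to the thickness bound), a single scale works for all of them.

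Next I would invoke the map $\mathbb R^n \to \Th\mathcal M(n,k)$ constructed in the proof of Proposition~\ref{prop:thom}, sending the neighborhood $N_v$ of each vertex $v$ of $X'$ to the cell $K_{(\lk(v), f|_{\st(v)}, N_v)}$ and simplices disjoint from $f(X')$ to the point at infinity, and extending over $\infty$ to get $f:S^n \to \Th\mathcal M(n,k)$. The key observation is that the triples $(\lk(v), f|_{\st(v)}, N_v)$ that actually arise, as $v$ ranges over vertices of $X'$, all come from a finite list depending only on $n,k,L$: the link type $\lk(v)$ has at most $L$ simplices (bounded geometry) and is a permissible link, the embedding $f|_{\st(v)}$ is one of finitely many lattice configurations by Theorem~\ref{thm:emb}(ii)--(iv), and $N_v$ is built from the fixed triangulation $\tau$ intersected with a bounded neighborhood, hence also finitely many possibilities. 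Therefore the union $T_{\mathcal M}(n,k,L)$ of the finitely many cells $K_{(P,f,N)}$ that can occur, together with all their faces and the point at infinity, is a finite subcomplex of $\Th\mathcal M(n,k)$, and it contains the image of every such $f$. It is $(n-k-1)$-connected by the same argument as in Proposition~\ref{prop:thom}: the $(n-k-1)$-skeleton of each $N$ misses $f(\mathrm{image})$, so it lies in $N_\infty$ and collapses to the point at infinity. Finally $f^{-1}(\Th_0\mathcal M(n,k))$ is PL homeomorphic to $X'$, hence to $X$, by the retraction construction in Proposition~\ref{prop:thom}.

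For the Lipschitz bound: the map $f$ is simplexwise linear, and on each simplex of $\tau$ it maps to a simplex of one of finitely many model cells, so its local Lipschitz constant is bounded by a constant $c(n,L)$. Since $X'$ (and its regular neighborhood, carried by $\tau$) sits inside a ball of radius $R$, the whole sphere $S^n$ can be given a metric in which it is covered by $O(R^n)$ standard simplices mapping with bounded Lipschitz constant --- but in fact what we need is just that $f$ itself, as a map from the standard sphere of radius $\sim R$, has Lipschitz constant $c(n,L)$, and rescaling to the unit sphere multiplies this by $R$, giving Lipschitz constant $C(n,L)\cdot R = C(n,L)V^{1/(n-k)}(\log V)^{2k+2}$ as claimed (absorbing constants). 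The part about $Y$ is then a routine addendum: homotope $\ph:X\to Y$ to a map $\ph'$ simplicial on $\tau$ (this can be done staying within bounded geometry and without increasing the relevant Lipschitz data, since $Y$ is a fixed finite complex), record for each vertex $v$ of $X'$ the simplicial map $g_v:c\lk(v)\to Y$ obtained by restriction, and use the $K_{(P,f,N,g)}$ of \S\ref{S:addY}; there are still only finitely many triples-with-$g$ because $Y$ is finite, so $\tilde f$ lands in $p^{-1}T_{\mathcal M}(n,k,L)$, and $\tilde g\circ\tilde f|_{\tilde f^{-1}\Th_0\mathcal M(n,k)}\simeq\ph$ holds by the computation in the proof of Proposition~\ref{prop:smash}. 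The main obstacle, as indicated above, is the uniformity in the choice of $\tau$: making one lattice triangulation of $\mathbb R^n$ that is transverse to every local configuration of $X'$ and fine enough for Lemma~\ref{lem:fine}, uniformly over all $X$ of bounded geometry of type $L$. Everything else is bookkeeping that follows from the finiteness of local types.
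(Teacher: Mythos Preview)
Your proposal is correct and follows essentially the same route as the paper: fix a $\Lambda(L)$-invariant triangulation $\tau$ of $\mathbb R^n$ at scale governed by $\alpha(n,L)$ that is transverse to the finitely many local simplex types allowed by Theorem~\ref{thm:emb}(i)--(iv), define $T_{\mathcal M}(n,k,L)$ as the union of the finitely many $K_{(P,f,N)}$ that can arise, and read off the Lipschitz bound from the radius $R$ after rescaling. One small slip: in the last paragraph you want $\ph'$ simplicial on the subdivision $X'$, not on $\tau$ (which triangulates $\mathbb R^n$, not $X$); also, the paper argues $(n-k-1)$-connectedness of the finite subcomplex via the fact that the $K_{(P,f,N)}$ and their overlaps are each $(n-k-1)$-connected, which is slightly more robust for a subcomplex than the global skeleton-collapse you invoke, though both work here.
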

\begin{proof}
  Let $\Lambda(L)$ be the lattice in $\mathbb R^n$ specified in condition (iv) of Theorem \ref{thm:emb}.  Let $\tau$ be a $\Lambda(L)$-invariant triangulation of $\mathbb R^n$ with the following properties:
  \begin{itemize}
  \item The edge lengths are at most $\frac{\alpha(n,L)}{2}$.
  \item It is transverse to every possible simplex of an embedding satisfying conditions (i)--(iv) of Theorem \ref{thm:emb}.  (This is possible since there are finitely many such possible simplices up to the action of $\Lambda(L)$.)
  \end{itemize}

  Now let $T_{\mathcal M}(n,k,L) \subseteq \Th\mathcal M(n,k)$ be the union of
  subcomplexes $K_{(\LL,f,N)}$ such that:
  \begin{itemize}
  \item $\LL$ is an admissible link in $\mathcal L_n$.
  \item $f$ is an embedding of $c\LL$ satisfying conditions (i)--(iv) of Theorem
    \ref{thm:emb}.
  \item $N$ is the subcomplex of $\tau$ consisting of simplices which intersect
    $f(c\LL)$ and are at distance at least $\frac{\alpha(n,L)}{2}$ from $f(\LL)$.
  \end{itemize}
  The number of such combinations is finite, and therefore $T_{\mathcal M}(n,k,L)$
  is a finite complex.  Moreover, it's $(n-k-1)$-connected since all the
  $K_{(\LL,f,N)}$ are $(n-k-1)$-connected and glued together along
  $(n-k-1)$-connected subcomplexes.

  Now by Theorem \ref{thm:emb}, there is a subdivision $X'$ of $X$ which embeds
  linearly into the $n$-dimensional Euclidean ball of radius
  \[R=C(n,L)V^{\frac{1}{n-k}}(\log V)^{2k+2}\]
  such that the embedding satisfies conditions (i)--(iv).  Conditions (i) and
  (ii) imply that the triangulation $\tau$ satisfies the conditions of Lemma
  \ref{lem:fine}.  This embedding induces a map from the $R$-ball to
  $T_{\mathcal M}(n,k,L)$ which is simplicial on a $C(n)$-times barycentric
  subdivision of $\tau$, and hence $C(n,L)$-Lipschitz.  Since the boundary of
  the $R$-ball is mapped to $\infty$, this map extends to a sphere.  If we
  rescale this to be the unit sphere, the Lipschitz constant becomes $C(n,L)R$.

  Finally, in the case of a map $\ph:X \to Y$, we can homotope this map to a map
  $\ph':X \to Y$ which is simplicial on $X'$, and then decide the lift to
  $(\Th \wedge Y)\mathcal M(n,k)$ based on the behavior of $\ph'$ near each
  vertex.
\end{proof}

Note that for any finite simplicial complex $Y$, since $p$ is finite-to-one,
$p^{-1}T_{\mathcal M}(n,k,L)$ is also a finite complex.  We now embed
$p^{-1}T_{\mathcal M}(n,k,L)$ in a larger, but still finite subcomplex of
$(\Th \wedge Y)\mathcal M(n+1,k+1)$ in which we can nullhomotope what needs to be
nullhomotoped.  This construction is essentially purely formal.
\begin{lem} \label{lem:U}
  There is a finite simplicial complex $U_{\mathcal M,Y}(n,k,L) \supseteq p^{-1}T_{\mathcal M}(n,k,L)$ equipped with a simplicial map
  \[\iota:U_{\mathcal M}(n,k,L) \to (\Th \wedge Y)\mathcal M(n+1,k+1)\]
  which extends the inclusion $\iota_0:p^{-1}T_{\mathcal M}(n,k,L) \hookrightarrow (\Th \wedge Y)\mathcal M(n,k)$, such that:
  \begin{enumerate}[(i)]
  \item $U_{\mathcal M,Y}(n,k,L)$ is $(n-k-1)$-connected.
  \item If a map $S^n \to p^{-1}T_{\mathcal M}(n,k,L)$ is nullhomotopic in
    $(\Th \wedge Y)\mathcal M(n+1,k+1)$, then it is nullhomotopic in
    $U_{\mathcal M,Y}(n,k,L)$.
  \end{enumerate}
\end{lem}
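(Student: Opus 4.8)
The plan is to build $U_{\mathcal M,Y}(n,k,L)$ by a mapping-cylinder/telescope construction that formally attaches enough cells to kill precisely the homotopy classes we need, while retaining finiteness and the connectivity bound. Since $p^{-1}T_{\mathcal M}(n,k,L)$ is a finite complex, the set of simplicial maps $S^n \to p^{-1}T_{\mathcal M}(n,k,L)$ from any \emph{fixed} triangulation of $S^n$ is finite, and in fact only finitely many homotopy classes in $\pi_n(p^{-1}T_{\mathcal M}(n,k,L))$ arise as classes of maps with a bounded number of simplices. But we do not actually need to bound the number of simplices in the sphere: what matters is that the subset $K \subseteq \pi_n\bigl(p^{-1}T_{\mathcal M}(n,k,L)\bigr)$ consisting of classes that die in $\pi_n\bigl((\Th\wedge Y)\mathcal M(n+1,k+1)\bigr)$ is the kernel of the map induced by $\iota_0$, hence a subgroup; and since $\pi_n$ of a finite complex is a finitely generated abelian group (for $n \geq 2$; the case $n=1$ is trivial here because everything is simply connected by the connectivity statement), $K$ is finitely generated. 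Pick generators $\beta_1,\dots,\beta_m \in K$ and, for each, a map $b_i : S^n \to p^{-1}T_{\mathcal M}(n,k,L)$ representing $\beta_i$ together with a nullhomotopy $H_i : D^{n+1} \to (\Th\wedge Y)\mathcal M(n+1,k+1)$.

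Now define $U_{\mathcal M,Y}(n,k,L)$ to be $p^{-1}T_{\mathcal M}(n,k,L)$ with an $(n{+}1)$-cell attached along each $b_i$, and extend $\iota_0$ to $\iota$ by sending the $i$-th new cell via $H_i$. After simplicially approximating $H_i$ and the attaching maps, this is a finite simplicial complex equipped with a simplicial map $\iota$ extending $\iota_0$. For property (ii): if $g : S^n \to p^{-1}T_{\mathcal M}(n,k,L)$ is nullhomotopic in $(\Th\wedge Y)\mathcal M(n+1,k+1)$, then $[g] \in K$, so $[g] = \sum n_i \beta_i$ in $\pi_n\bigl(p^{-1}T_{\mathcal M}(n,k,L)\bigr)$; in $U_{\mathcal M,Y}(n,k,L)$ each $\beta_i$ is nullhomotopic by construction (the attached cell provides the nullhomotopy), so $[g]$ is nullhomotopic in $U_{\mathcal M,Y}(n,k,L)$ as well, since the image of $\pi_n\bigl(p^{-1}T_{\mathcal M}(n,k,L)\bigr)$ in $\pi_n\bigl(U_{\mathcal M,Y}(n,k,L)\bigr)$ is exactly the quotient by the subgroup generated by the $\beta_i$. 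For property (i): attaching $(n{+}1)$-cells to an $(n{-}k{-}1)$-connected complex does not change $\pi_j$ for $j \leq n-k-1$ (as $n-k-1 < n$, being careful that $k \geq 0$ so there is nothing to check only when $n-k-1 < 1$), so $U_{\mathcal M,Y}(n,k,L)$ remains $(n{-}k{-}1)$-connected.

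The one genuine subtlety — and the place where a little care is needed rather than pure formality — is that $\iota$ must be a \emph{simplicial} map on a \emph{finite} complex, yet the nullhomotopies $H_i$ a priori land in the infinite complex $(\Th\wedge Y)\mathcal M(n+1,k+1)$. This is resolved exactly as $T_{\mathcal M}$ was extracted from $\Th\mathcal M$ in Lemma~\ref{lem:phi}: the image of each $H_i$, being the image of a compact set under a continuous (after simplicial approximation, simplicial) map, meets only finitely many simplices of $(\Th\wedge Y)\mathcal M(n+1,k+1)$, so we may take $\iota$ to factor through a finite subcomplex, and then $U_{\mathcal M,Y}(n,k,L)$ itself (which consists of $p^{-1}T_{\mathcal M}(n,k,L)$ together with finitely many simplices coming from subdivisions of the cells) is finite. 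I expect the only thing to watch is compatibility of triangulations — subdividing $p^{-1}T_{\mathcal M}(n,k,L)$ enough that all the attaching maps and all the $H_i$ become simplicial simultaneously — but since there are finitely many maps involved this is routine, and the connectivity and property~(ii) arguments above are unaffected by subdivision.
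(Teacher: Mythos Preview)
Your proposal is correct and follows essentially the same approach as the paper: both arguments observe that the kernel of $(\iota_0)_*$ on $\pi_n$ is finitely generated (using that $p^{-1}T_{\mathcal M}(n,k,L)$ is a simply connected finite complex), choose simplicial representatives $b_i$ for generators together with simplicial fillings $H_i$ in $(\Th\wedge Y)\mathcal M(n+1,k+1)$, and attach the corresponding $(n{+}1)$-disks. Your treatment is in fact slightly more explicit than the paper's about why property~(ii) follows and about the bookkeeping needed to make $\iota$ simplicial on a finite complex.
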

\begin{proof}
  We know that $(\Th \wedge Y) \mathcal M(n+1,k+1)$ and
  $p^{-1}T_{\mathcal M}(n,k,L)$ are both $(n-k-1)$-connected.  We will build
  $U_{\mathcal M,Y}(n,k,L)$ by adding cells of dimension $n+1$ and therefore not
  change this.

  Since $p^{-1}T_{\mathcal M}(n,k,L)$ is a simply connected finite complex, its
  higher homotopy groups are finitely generated.  In particular, the kernel of
  the induced map
  \[(\iota_0)_*:\pi_n(p^{-1}T_{\mathcal M}(n,k,L)) \to \pi_n((\Th \wedge Y)\mathcal M(n+1,k+1))\]
  is finitely generated.  Choose simplicial maps $f_i:S^n \to p^{-1}T_{\mathcal M}(n,k,L)$ representing the generators of the kernel, as well as simplicial fillings
  \[F_i:D^{n+1} \to (\Th \wedge Y)\mathcal M(n+1,k+1).\]
  (In this notation, we are suppressing the simplicial structures on the sphere and disk on which these maps are defined.)

  We build the simplicial complex $U_{\mathcal M,Y}(n,k,L)$ by gluing these
  simplicial disks onto $p^{-1}T_{\mathcal M}(n,k,L)$ using the maps $f_i$ as
  attaching maps.  Then the map $\iota$ is constructed by extending $\iota_0$ to
  the new cells using the maps $F_i$.
\end{proof}

Now put the standard simplexwise linear metric on $U_{\mathcal M,Y}(n,k,L)$; this
also restricts to a metric on $p^{-1}T_{\mathcal M}(n,k,L)$.  Then
\cite[Corollary 4.3]{CDMW} yields the following fact:
\begin{lem} \label{lem:psi}
  Suppose that $n \geq 2k+2$, and let $f:S^n \to p^{-1}T_{\mathcal M}(n,k,L)$ be an
  $L$-Lipschitz map such that $\iota_0 \circ f$ is nullhomotopic.  Then $f$ is
  nullhomotopic in $U_{\mathcal M,Y}(n,k,L)$ via a $C(L+1)$-Lipschitz nullhomotopy,
  where $C$ is a constant which depends on $\mathcal M$, $n$, $k$, $L$, and $Y$.
\end{lem}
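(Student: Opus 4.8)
By Lemma \ref{lem:U}(ii) the topological half of the statement is immediate: since $\iota_0 \circ f$ is nullhomotopic in $(\Th \wedge Y)\mathcal M(n+1,k+1)$, the map $f$ already bounds a disk inside $U := U_{\mathcal M,Y}(n,k,L)$. So the real content is the quantitative refinement, and the plan is to deduce it from the quantitative nullhomotopy machinery of \cite{CDMW}: concretely, to check that $U$, as a target, satisfies the hypotheses of \cite[Corollary 4.3]{CDMW}, which says that a nullhomotopic $L$-Lipschitz map of a sphere into a finite complex admits, in the appropriate range of dimensions, a nullhomotopy whose Lipschitz constant is linear in $L$.

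First I would record the structural properties of the target. The complex $U$ is finite: $T_{\mathcal M}(n,k,L)$ is finite by Lemma \ref{lem:phi}, the projection $p$ is finite-to-one because $Y$ is finite, so $p^{-1}T_{\mathcal M}(n,k,L)$ is finite, and $U$ is obtained from it by attaching finitely many $(n+1)$-cells. It is $(n-k-1)$-connected by Lemma \ref{lem:U}(i), hence simply connected since $n-k-1 \ge k+1 \ge 1$. Its dimension is at most $n+1$: the ambient space $(\Th \wedge Y)\mathcal M(n,k)$ has dimension $\le n+1$ because each of its blocks $N \cup cN_\infty$ does ($N$ sitting inside a triangulation of $\mathbb R^n$), hence so does the subcomplex $p^{-1}T_{\mathcal M}(n,k,L)$, and the cells adjoined to form $U$ are $(n+1)$-cells. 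Finally, the simplexwise-linear metric on $U$ restricts to the given metric on $p^{-1}T_{\mathcal M}(n,k,L)$, so $f$ remains $L$-Lipschitz when viewed as a nullhomotopic map into $U$.

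Now the hypothesis $n \ge 2k+2$ enters exactly as the metastable inequality required by \cite[Corollary 4.3]{CDMW}: it is equivalent to $n \le 2(n-k-1)$, equivalently $\dim U \le n+1 \le 2(n-k-1)+1$, so the dimension of $S^n$ (and of $U$) does not exceed twice the connectivity of $U$. In this Freudenthal range the set $[S^n,U]$ is a group under which extension problems behave additively, and the obstructions to pushing a partial nullhomotopy across $S^n \times [0,1]$ rel its ends lie in controlled stable homotopy groups of the finite complex $U$; this is what permits building the nullhomotopy skeleton by skeleton with only a bounded multiplicative loss of Lipschitz control. Applying \cite[Corollary 4.3]{CDMW} to $f$, regarded as a nullhomotopic $L$-Lipschitz map into $U$, therefore produces a nullhomotopy $S^n \times [0,1] \to U$ of Lipschitz constant at most $C(L+1)$, with $C$ depending only on $U$—hence on $\mathcal M$, $n$, $k$, $L$ and $Y$, together with the generators $f_i$ and fillings $F_i$ chosen in the proof of Lemma \ref{lem:U}, all fixed once and for all. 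The genuinely hard work—the quantitative nullhomotopy estimate valid in the metastable range—is carried out in \cite{CDMW}; what is left on our side is only to confirm that $U$ lies within its scope, which is the bookkeeping of the previous paragraph, and to accept that $C$ is inexplicit in $L$ because it depends on the hand-built complex $U$. This last point is precisely the source of the $L$-inexplicitness flagged after the Main Theorem, and it is also why no feature of $U$ beyond its finiteness, connectivity and dimension is used—so that these ad hoc Thom-type complexes suffice in place of genuine PL bordism classifying spaces.
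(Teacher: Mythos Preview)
Your proposal is correct and matches the paper's approach exactly: the paper simply states that \cite[Corollary 4.3]{CDMW} yields the lemma, and you have spelled out the verification of its hypotheses (finiteness of $U$, $(n-k-1)$-connectedness from Lemma~\ref{lem:U}(i), and the metastable inequality $n \le 2(n-k-1)$ equivalent to $n \ge 2k+2$). Your write-up is in fact more detailed than what the paper provides.
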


\section{Completing the proof}
We now put the earlier results together to prove the main theorem, which we
restate here.  This section implements steps (4) and (5) of the proof outline.
\begin{thm*}
  Let $\mathcal M$ be a category of manifolds with prescribed singularities, $k$
  and $L$ integers, $\epsi>0$, and $Y$ a finite simplicial complex.  Then there
  are constants $C=C(\mathcal M,k,L,\epsi,Y)$ and $L'=L'(\mathcal M,k,L)$ such
  that the following holds.

  Let $X$ be a triangulated $k$-dimensional $\mathcal M$-manifold with $V$
  $k$-simplices and bounded geometry of type $L$, and let $\ph:X \to Y$ be a
  simplicial map such that $(X,\ph)$ represents the zero class in
  $\Omega^{\mathcal M}_k(Y)$.  Then there is a $(k+1)$-dimensional
  $\mathcal M$-manifold $W$ with $\partial W=X$ with at most $CV^{1+\epsi}$
  simplices and bounded geometry of type $L'$, and a simplicial map
  $\tilde \ph:W \to Y$ extending $\ph$.
\end{thm*}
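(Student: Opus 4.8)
The plan is to chain together the lemmas of the previous sections and then fix up the local geometry. First I would apply Lemma~\ref{lem:phi} to the triangulated $\mathcal M$-manifold $X$ (together with $\ph\colon X\to Y$), obtaining, for any chosen $n\ge 2k+2$, a PL map $\tilde f\colon S^n\to p^{-1}T_{\mathcal M}(n,k,L)\subset(\Th\wedge Y)\mathcal M(n,k)$ with Lipschitz constant at most $\ph(V):=C(n,L)V^{1/(n-k)}(\log V)^{2k+2}$, whose preimage of the zero section recovers $X$ and with $\tilde g\circ\tilde f\simeq\ph$ on that preimage. Since $(X,\ph)$ is nullbordant, by Proposition~\ref{prop:smash} the class $[\tilde f]$ lies in the kernel of $\pi_n(p^{-1}T_{\mathcal M}(n,k,L))\to\pi_n((\Th\wedge Y)\mathcal M(n+1,k+1))$, so $\iota_0\circ\tilde f$ is nullhomotopic; by Lemma~\ref{lem:U}(ii) it is in fact nullhomotopic in the finite complex $U_{\mathcal M,Y}(n,k,L)$, and by Lemma~\ref{lem:psi} there is such a nullhomotopy $H\colon D^{n+1}\to U_{\mathcal M,Y}(n,k,L)$ with Lipschitz constant $\psi(\ph(V))$ where $\psi$ is linear.

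Next I would make $H$ simplicial. After composing with $\iota\colon U_{\mathcal M,Y}(n,k,L)\to(\Th\wedge Y)\mathcal M(n+1,k+1)$ and subdividing, a standard simplicial-approximation argument (keeping the triangulation on $\partial D^{n+1}=S^n$ fixed, where the map is already simplicial) replaces $H$ by a simplicial map from a triangulation of $D^{n+1}$ whose number of simplices is bounded by the $(n+1)$st power of the Lipschitz constant, i.e.\ $O(\psi(\ph(V))^{n+1})=O(V^{(n+1)/(n-k)}(\log V)^{O(1)})$; the mesh needed depends only on $n,k,L,\mathcal M,Y$. Then, exactly as in Proposition~\ref{prop:thom} and Proposition~\ref{prop:smash}, I take $W=H^{-1}\bigl((\Th\wedge Y)_0\mathcal M(n+1,k+1)\bigr)$: this is a $(k+1)$-dimensional $\mathcal M$-manifold with $\partial W=X$, with a number of simplices bounded by the number of simplices of the domain of $H$, hence $O(V^{1+(1+k)/(n-k)}\,\mathrm{polylog}\,V)$, and the map $\tilde\ph:=\tilde g\circ(\iota\circ H)|_W\colon W\to Y$ extends $\ph$ up to homotopy; a homotopy rel $\partial$ can be absorbed by a further bounded modification. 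Choosing $n$ large (say $n\ge k+1/\epsi$) makes the exponent $1+\epsi$, which gives the stated bound with $C=C(\mathcal M,k,L,\epsi,Y)$.

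The remaining issue, and the one I expect to be the main obstacle, is that the bound on the local geometry of this $W$ depends on $n$: the triangulation of $D^{n+1}$ from simplicial approximation has vertices of very high valence, and the preimage construction inherits this. This is step~(5) of the outline. To remove the $n$-dependence I would work locally: the Thom space $\Th\mathcal M(n+1,k+1)$ is assembled from finitely many standard pieces $K_{(\LL,f,N)}$ (for the given $L$ and the chosen $\tau$), so the cobordism $W$ is, near each of its simplices, one of finitely many standard local patterns determined by $\mathcal M$, $n$, $k$, $L$; but there are still infinitely many such patterns as $n$ varies only through these finitely-many-per-$n$ lists. The key point is that each such local pattern is a PL ball or half-ball whose boundary pattern on $X$ has already-bounded geometry of type $O_L(1)$, so one may re-triangulate each star of $W$ by a bounded-geometry triangulation of a PL ball relative to its (bounded-geometry) boundary, at the cost of a multiplicative constant in the simplex count; this is the kind of relative bounded-geometry filling of a ball used in Corollary~\ref{cor:M3}. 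Carrying this out compatibly across adjacent stars — so that the re-triangulations agree on overlaps and the map to $Y$ stays simplicial — is the technical heart of the argument; once done, $W$ has bounded geometry of type $L'=L'(\mathcal M,k,L)$ and at most $CV^{1+\epsi}$ simplices, and $\tilde\ph$ restricts to $\ph$ on $X=\partial W$, completing the proof.
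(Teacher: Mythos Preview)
Your outline through step~(4) is essentially the paper's argument. One small point: the map from Lemma~\ref{lem:phi} is PL, simplicial only on some particular triangulation of $S^n$, while the quantitative simplicial approximation of the nullhomotopy lives on a finer one; so approximation does perturb the boundary. The paper repairs this with an explicit mapping-cylinder collar between $f$ and its approximation $f\circ\sigma$, rather than by an unspecified ``bounded modification''.

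The substantive gap is in step~(5). Your premise that each star has boundary ``already of bounded geometry of type $O_L(1)$'' is false for interior vertices of $W$: the link of such a vertex inherits its combinatorics from the triangulation of $D^{n+1}$, and hence has complexity depending on $n$. So one cannot directly ``fill a bounded-geometry ball relative to its bounded-geometry boundary'' --- the boundary is not bounded. (Also, for general $\mathcal M$ these stars are cones on $\mathcal M$-links, not PL balls.) The paper's fix is an induction on codimension that refines the idea of Corollary~\ref{cor:M3}. After one barycentric subdivision, one first attacks the barycenters of codimension-$2$ simplices: their links are (for pseudomanifolds) unions of circles, which admit bounded-geometry disk fillings of linear size regardless of length. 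Replacing those cones makes the links of the next layer --- barycenters of codimension-$3$ simplices --- have \emph{geometry} bounded independently of $n$, though their \emph{volume} still depends on $n$; one then fills those by invoking the main theorem recursively in lower dimension, and continues. The invariant driving the induction --- geometry bound independent of $n$ at each stage, volume bound still $n$-dependent but finite --- is precisely what your sketch lacks, and without it there is no way to make the ``compatible re-triangulations'' you allude to produce an $n$-independent $L'$.
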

\begin{proof}
  Let $n \geq 2k+2$.  The Thom construction from Proposition \ref{prop:smash}
  tells us that the simplicial map $f:S^n \to p^{-1}T_{\mathcal M}(n,k,L)$
  induced by any PL embedding of $X$ is nullhomotopic in
  $(\Th \wedge Y)\mathcal M(n+1,k+1)$.  Combining Lemmas \ref{lem:phi} and
  \ref{lem:psi}, we see that there is a map $F:D^{n+1} \to U_{\mathcal M,Y}(n,k,L)$
  such that $F|_{S^n}=f$ and $\lambda=\Lip F=O(V^{1/q}(\log V)^{2k+2}))$.

  By a quantitative simplicial approximation theorem, see
  e.g.~\cite[Prop.~2.1]{CDMW}, we can simplicially approximate $F$ on a
  subdivided triangulation of $D^{n+1}$ at scale $\sim 1/\lambda$.  This gives us
  a simplicial nullhomotopy of a simplicial approximation $\tilde f$ of $f$.

  We would like to get back to a nullhomotopy of $f$.  Notice that
  $\tilde f=f \circ \sigma$ where $\sigma:S^n \to S^n$ is a simplicial map
  from the subdivision to the original triangulation.  Moreover, $\tilde f$ can
  be constructed so that for each simplex $\delta$ of the original
  triangulation, $\sigma^{-1}(\delta)$ is a contractible subcomplex.  So we fix
  a triangulation of $S^n \times [0,1]$ as follows.  First triangulate
  $S^n \times [0,1]$ using the usual product triangulation, where
  top-dimensional simplices take the form
  \[[(\delta_0,0), (\delta_1,0), \ldots, (\delta_k,0), (\delta_k,1), (\delta_{k+1},1), \ldots, (\delta_n,1)]\]
  for each $0 \leq k \leq n$ and each $n$-simplex $[\delta_0,\ldots,\delta_n]$
  of the subdivided triangulation of $S^n$.  Now take the mapping cylinder of
  $\sigma$, which is a simplicial quotient of this triangulation.  Our extra
  assumption about $\tilde f$ implies that the mapping cylinder is still a
  triangulation of $S^n \times [0,1]$, with the subset $S^n \times \{0\}$ given
  the subdivided triangulation and $S^n \times \{1\}$ given the original
  triangulation.  Then the map $\sigma \circ \text{proj}_{S^n}$ on the product
  triangulation induces a simplicial homotopy between $f$ and $\tilde f$ on the
  mapping cylinder.  So we get a simplicial nullhomotopy of $f$ by attaching
  this to our disk as a collar.

  Now Lemma \ref{lem:U} gives us a simplicial map to our model of the
  classifying space for the bordism homology of $Y$,
  \[\iota:U_{\mathcal M,Y}(n,k,L) \to (\Th \wedge Y)\mathcal M(n+1,k+1).\]
  Then by the argument of Proposition \ref{prop:smash},
  $(\iota \circ F)^{-1}(\Th \wedge Y)_0\mathcal M(n,k)$ is an
  $\mathcal M$-manifold $W$ whose boundary is $X$.  Since the image of $\iota$
  is a finite subcomplex of $(\Th \wedge Y)\mathcal M(n+1,k+1)$, the link types
  of $W$ come from a finite set depending on $\mathcal M$, $n$, $k$, $L$, and
  $Y$.  In particular, there is some $L'(\mathcal M,n,k,L,Y)$ which bounds the
  geometry of these link types.

  Moreover, there is a triangulation of $W$ with a bounded number of simplices
  per simplex of $D^{n+1}$: we obtain it by pulling back from the image under
  $\iota \circ F$ the triangulation on which the map
  \[\tilde g:(\Th \wedge Y)_0\mathcal M(n,k) \to Y\]
  is defined; incidentally this gives us a simplicial map
  $\Phi=\tilde g \circ \iota \circ F:W \to Y$.  With this triangulation, $W$
  consists of $O\bigl(V^{1+\frac{k+1}{n-k}}(\log V)^{(2k+2)(n+1)}\bigr)$ simplices.
  Given any $\epsi>0$, for sufficiently large $n$, this is $O(V^{1+\epsi})$.  

  At this point in the proof, $L'$ depends on $n$ and $Y$ as well as $k$, $L$,
  and the category.  We will fix this at the expense of increasing the constant
  $C$, by an inductive process which replaces certain subcomplexes of $W$ with
  other subcomplexes.  The strategy is similar to the proof of Corollary
  \ref{cor:M3}.

  We start by taking a single barycentric subdivision of $W$.  Let $V_i$ be the
  set of vertices in the interior (outside $\partial W=X$) of the resulting
  triangulation that originate as barycenters of $i$-simplices.  Notice that the
  star of a vertex $v \in V_i$ is a join
  $\bs(\partial\Delta^i) * c\bs(\LL(v))$, where $\LL(v)$ is the
  $(k-i-1)$-dimensional link of the original $i$-simplex.  We will modify $W$ by
  replacing $c\bs(\LL(v))$ with other fillings of $\bs(\LL(v))$.

  We start with $i=k-2$.  If $\mathcal M$-manifolds are pseudomanifolds, then
  for $v \in V_{k-2}$, $\bs(\LL(v))$ is a disjoint union of circles.  This can be
  filled with disks with $7$ faces to a vertex, at most $3$ faces to a vertex on
  the boundary, and linearly many faces in terms of $\lvert \LL(v) \rvert$; let
  $Q(v)$ be this collection of disks.  For each $v \in V_{k-2}$, we replace its
  star $\bs(\partial\Delta^{k-2}) * c\bs(\LL(v)) \subset W$ with
  $\bs(\partial\Delta^{k-2}) * Q(v)$ to get a new $\mathcal M$-manifold $W_2$.
  Notice that although we have eliminated the vertices in $V_{k-2}$, the sets of
  vertices $V_i$ for $i<k-2$ are unchanged from $W$, and their stars are of the
  form $\bs(\partial\Delta^i) * c\LL_2(v)$, where $\LL_2(v)$ is constructed as
  follows.  For $v \in V_{k-3}$, $\bs(\LL(v))$ is tiled by $c\bs(\LL(w))$'s for
  $w \in V_{k-2}$, meeting three to a vertex; $\LL_2(v)$ is built by replacing
  each $c\bs(\LL(w))$ by $Q(w)$.  Then the parts of $\LL_2(v)$ outside $X$ have
  at most $9$ faces meeting at a vertex.

  More generally, suppose by induction that we have built a complex $W_{i-1}$ in
  which we have replaced the stars of vertices $w \in V_{k-(i-1)}$ with
  subcomplexes of the form $\bs(\partial\Delta^{i-1}) * Q(w)$, where $Q(w)$ is a
  subcomplex of bounded geometry of type some $L'_{i-1}(\mathcal M,k,L)$.  Then
  for each $v \in V_{k-i}$, its star is a complex
  $\bs(\partial\Delta^{k-i}) * c\LL_{i-1}(v)$ such that the geometry of the
  $(i-1)$-complex $\LL_{i-1}(v)$ is bounded by
  $L_i=\max(L,iL'_{i-1}(\mathcal M,k,L))$.  Moreover, by induction and since we
  had an original bound on the complexity of link types, the volume of
  $\LL_{i-1}(v)$ is bounded as a function of $\mathcal M$, $n$, $k$, $L$, and
  $Y$.  We can then replace $c\LL_{i-1}(v)$ with a complex $Q(v)$ (a fixed
  $\mathcal M$-manifold whose boundary is $\LL_{i-1}(v)$) such that:
  \begin{itemize}
  \item The geometry of $Q(v)$ is bounded by $L'_i=L'(\mathcal M,2i,i-1,L_i,*)$.
  \item The volume of $Q(v)$ is bounded as a function of $\mathcal M$, $n$, $k$,
    $L$, and $Y$ (simply because the set of possible $\LL_{i-1}(v)$s is finite
    and depends only on these parameters).
  \end{itemize}
  This completes the inductive step to construct $Y$.

  At each stage, we extend the map $\Phi$ over $Q(v)$ by mapping each interior
  vertex to $\Phi(v)$ and extending by linearity.  Since we are replacing the
  star of $v$, this is a well-defined map.

  At the end of the induction, we have replaced $W$ with an
  $\mathcal M$-manifold whose boundary is $X$ and whose geometry still depends
  on $\mathcal M$, $k$, and $L$, but not on $n$.  Moreover, we have multiplied
  the volume by at most a constant depending on $\mathcal M$, $n$, $k$, and $L$.
\end{proof}

\appendix
\section{PL versus smooth bounded geometry} \label{smoothing}

Here we discuss the relationship between bounded geometry for smooth and for PL
manifolds.  Specifically, we show that for smooth manifolds these are closely
related notions.  We have been informed by Januszkiewicz that decades ago these results were lectured on by Calabi at an {\it Arbeitstagung}.

A \emph{smoothing} of a PL manifold $X$ is a smooth structure which is compatible with the PL structure.  This can be formalized, for example, as an atlas of smooth charts whose transition maps to the original PL atlas are piecewise smooth.  Through dimension 6, the smooth and PL categories are equivalent; in higher dimensions, the smoothing problem has a homotopy-theoretic reformulation: a smoothing is determined by a homotopy class of lifts of the classifying map $X \to BPL_n$ to $BO_n$ \cite[Part II]{HM}.  The obstructions to performing this lift are the groups $\Theta_k$ of exotic $k$-spheres, for $k \leq n$.  We say a PL manifold is \emph{smoothable} if it admits a smoothing.

\begin{thm} \label{thm:triangulation}
  Let $M$ be a Riemannian $n$-manifold with sectional curvatures $|K| \leq 1$
  and injectivity radius at least $1$, and volume $V$.  Then there is a
  triangulation of $M$ with at most $C_1(n)V$ simplices and at most $C_2(n)$
  simplices incident to each vertex.
\end{thm}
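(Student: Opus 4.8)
The plan is to build the triangulation combinatorially from a single fine metric net $P \subset M$, to bound its total size and local complexity purely by volume comparison, and to use the hypotheses $|K|\le 1$ and $\mathrm{inj}\ge 1$ only to guarantee that a fixed small scale $\epsi=\epsi(n)$ is uniformly Euclidean. Concretely, I would first fix $r_0=r_0(n)>0$ small enough that every metric ball $B(p,r_0)\subset M$ is geodesically strongly convex, $\exp_p$ carries the Euclidean $r_0$-ball diffeomorphically onto $B(p,r_0)$, and $\exp_p$ is $(1+\tfrac{1}{100n})$-bi-Lipschitz there; such an $r_0$ exists by Rauch comparison and the standard convexity estimates. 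Set $\epsi=r_0/C(n)$ for a large dimensional constant, and let $P=\{p_1,\dots,p_N\}$ be a maximal $\epsi$-separated subset of $M$. The balls $B(p_i,\epsi/2)$ are pairwise disjoint and, by G\"unther's inequality (using $\epsi/2<\mathrm{inj}$ and $K\le 1$), each has volume at least $c(n)\epsi^n$, so $N\le C_1(n)V$; the balls $B(p_i,\epsi/2)$ with $p_i\in B(p_j,t\epsi)$ are disjoint inside $B(p_j,(t+1)\epsi)$, whose volume is at most $C(n)(t+1)^n\epsi^n$ by Bishop--Gromov, so each $p_i$ has at most $M(n,t)$ net points within distance $t\epsi$; and maximality makes $\{B(p_i,\epsi)\}$ a cover of $M$.

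Next I would perturb $P$ into general position: moving each $p_i$ by at most $\epsi/100$, and reading the local configuration through the nearly isometric charts $\exp_{p_i}^{-1}$ where it becomes a Euclidean point set up to a $(1+\tfrac{1}{100n})$-distortion, I would arrange that $P$ is $\epsi$-protected in the sense of Boissonnat--Dyer--Ghosh (no point of $M$ is $\epsi$-nearly equidistant from $n+2$ of the $p_i$); none of the volume bounds above is affected. For a fine enough protected net, the Delaunay complex $\mathrm{Del}(P)$ --- with one $j$-simplex for each $(j{+}1)$-subset of $P$ whose Voronoi cells meet --- is a genuine simplicial complex of dimension $\le n$, and the canonical map $|\mathrm{Del}(P)|\to M$ sending $p_i\mapsto p_i$ and interpolating chart by chart is a homeomorphism. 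This is the one step where the curvature and injectivity-radius bounds are essential, and it is the only substantive point: via the charts it reduces to the Euclidean fact that a protected and sufficiently dense sample carries a well-defined Delaunay triangulation.

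The estimates then follow formally. Since $P$ is $\epsi$-dense, every Voronoi cell has diameter at most $2\epsi$, so the vertices of any Delaunay simplex lie pairwise within $2\epsi$; by the neighbor bound $M(n,2)$, each $p_i$ therefore lies in at most $2^{M(n,2)}$ simplices, whence $\mathrm{Del}(P)$ has at most $2^{M(n,2)}N\le C_1(n)V$ simplices in total and at most $C_2(n):=2^{M(n,2)}$ at each vertex. The statement imposes no constraint on the shapes of the simplices, so no metric cleanup is required; if one insists on an abstract simplicial complex in the strictest sense, a single barycentric subdivision suffices and alters $C_1,C_2$ only by dimensional factors. I expect the main obstacle to be the perturbation-to-general-position lemma together with checking that $\mathrm{Del}(P)$ triangulates $M$ \emph{globally} and not merely within each chart. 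One can sidestep the combinatorial-geometry machinery at the cost of a point-set argument: the closed cover $\{\overline{B(p_i,\epsi)}\}$ is good for small $\epsi$ (every nonempty finite intersection is geodesically convex, hence contractible), and a quantitative nerve theorem followed by a Whitehead-type smoothing argument upgrades the nerve map to a homeomorphism --- using the same fineness that $|K|\le 1$ and $\mathrm{inj}\ge 1$ supply.
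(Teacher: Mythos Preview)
Your approach is correct in outline but takes a genuinely different route from the paper's. The paper also fixes an $\epsi$-net $\{x_i\}$ and uses the same packing/covering bounds, but instead of forming a global Delaunay complex it triangulates each $3\epsi$-ball separately by pushing forward a standard Euclidean mesh through the exponential map, then stitches these local triangulations together: a Bishop--Gromov argument gives a $(C_0(n)+1)$-coloring of the net with same-color $4\epsi$-balls disjoint, and a mesh-interpolation lemma from \cite{CMS} lets one merge the meshes one color at a time, degrading the geometry by a controlled factor at each of the $C_0(n)$ stages. Your Delaunay construction is more intrinsic and avoids the iterated patching, at the price of importing the Boissonnat--Dyer--Ghosh protection machinery to certify that the intrinsic Delaunay complex of a perturbed net is globally a triangulation; the paper's argument is more elementary in spirit but black-boxes the nontrivial interpolation lemma instead. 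One caution on your fallback sketch: the nerve of a good open cover yields only a homotopy equivalence to $M$, and a ``Whitehead-type smoothing argument'' does not by itself upgrade this to a homeomorphism---you would need to show the nerve map is cell-like or recover the Delaunay structure anyway, so that alternative is not really a shortcut.
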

\begin{thm} \label{thm:smoothing}
  Let $M$ be a smoothable PL $n$-manifold equipped with a PL triangulation with
  at most $L$ simplices incident to any vertex.  Then for constants
  $C_1,C_2,C_3$ depending on $n$ and $L$, every smoothing of $M$ has an
  associated Riemannian metric $g$ such that:
  \begin{enumerate}
  \item Every simplex of $(M,g)$ has volume at most $C_1$.
  \item $(M,g)$ has sectional curvature $|K| \leq C_2$ and injectivity radius at
    least $C_3^{-1}$.
  \end{enumerate}
\end{thm}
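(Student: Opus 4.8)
The plan is to prove Theorem \ref{thm:smoothing} by producing an explicit ``thick-and-thin'' metric adapted to the given PL triangulation and then invoking Whitehead's theorem that smoothings of a PL manifold are in bijection with compatible smooth structures on a fixed triangulation, so that it suffices to build one good metric per smoothing. First I would fix a smoothing of $M$ compatible with the given triangulation. Because at most $L$ simplices are incident to each vertex, the combinatorial isomorphism type of the (closed) star of each vertex ranges over a finite set $\mathcal S(n,L)$. For each combinatorial type $\sigma \in \mathcal S(n,L)$ I would choose, once and for all, a smooth Riemannian metric $g_\sigma$ on an open neighborhood $U_\sigma$ of the cone point in a smoothed model of $\st(\sigma)$, realizing each top simplex as a ``fat'' smooth chart of volume between two positive constants and bounded geometry. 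The metric $g$ on $M$ is then obtained by gluing these local models using a partition of unity subordinate to the open stars; since each point lies in at most $n+1$ stars, all of which have one of finitely many types, the resulting $g$ has all derivatives of the metric tensor bounded in each chart by constants depending only on $n$ and $L$. This immediately gives a bound $|K|\le C_2(n,L)$ on curvature and, via the standard comparison between curvature/derivative bounds and injectivity radius (e.g.\ Cheeger--Gromov--Taylor, or simply the finite-model observation that there is a uniform lower bound on the size of a normal neighborhood), a lower bound $C_3(n,L)^{-1}$ on the injectivity radius, as well as the volume bound $C_1(n,L)$ per simplex.

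In more detail, the gluing step needs care because the smoothing changes the metric near the lower-dimensional skeleta, where several local models overlap. I would handle this by working with a hierarchy of neighborhoods: a neighborhood of each vertex where a single $g_\sigma$ dominates, a slightly larger neighborhood of each edge where two vertex-models are interpolated, and so on down through the simplices of the triangulation. The interpolation functions are pulled back from fixed smooth bump functions on the finitely many combinatorial models, so the $C^k$-norms of $g$ in the natural charts are controlled entirely by $n$, $L$, and the (finitely many, hence uniformly bounded) choices of $g_\sigma$ and bump functions. The compatibility of all these local pictures is exactly what a compatible smoothing provides: on overlaps the transition maps between the model charts are smooth with bounded derivatives.

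The main obstacle I expect is the injectivity radius lower bound, since injectivity radius is not a purely local quantity: even with uniformly bounded geometry in charts, short geodesic loops could in principle appear through ``collapsing'' behavior. I would control this in two steps. Locally, a uniform lower bound on the radius of a geodesically convex ball follows from the two-sided curvature bound plus a uniform lower bound on the volume of unit balls, and the latter is forced by the fact that each ball of definite radius contains at least one full simplex of volume $\ge C_1(n,L)^{-1}$. Globally, one then invokes the standard fact (see \cite{CheeGr} or Cheeger--Gromov--Taylor) that for a complete manifold with $|K|\le C_2$ and volume of unit balls bounded below, the injectivity radius is bounded below by a constant depending only on these data. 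Combining the per-simplex volume bound (item 1) with the curvature and injectivity radius bounds (item 2) completes the proof.
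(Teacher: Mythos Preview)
Your approach has a real gap at exactly the point you flag as needing ``care'': the gluing.  You choose one smooth model metric $g_\sigma$ for each combinatorial type of vertex star, and then assert that a given smoothing of $M$ provides transition maps between these models that are ``smooth with bounded derivatives''.  Smoothness is fine, but the uniform bound is not justified and in general cannot be obtained this way.  The point is that the star of every vertex is a PL ball, hence has a unique smoothing; all the information distinguishing one smoothing of $M$ from another is carried precisely by how these local balls are glued.  Your single family $\{g_\sigma\}$, chosen once and for all from combinatorics alone, cannot by itself encode the different smoothings, so the burden falls entirely on the transition maps, and you give no reason why their $C^k$ norms should be bounded by a constant depending only on $n$ and $L$ rather than on the particular smoothing.

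The paper's argument supplies exactly the missing ingredient: finiteness of the groups $\Theta_k$ of exotic spheres.  It builds the metric inductively over a handle decomposition $M_0 \subset M_1 \subset \cdots \subset M_n$ coming from the second barycentric subdivision.  At the $k$th stage, extending the smoothing (and the metric already constructed on $M_{k-1}$) across a $k$-handle offers choices parameterized by $\Theta_k$; since $\Theta_k$ is finite, one can fix in advance, for every combinatorial type of $k$-handle neighborhood, every possible boundary metric from the previous stage, and every element of $\Theta_k$, a specific Riemannian extension.  Thus the total stock of local metrics is finite and depends only on $n$ and $L$, and the constants $C_1,C_2,C_3$ are obtained by maximizing over this finite set.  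This finiteness is doing the real work, and your partition-of-unity gluing does not invoke it; without it the injectivity-radius and curvature bounds you derive are bounds in terms of unspecified transition-map norms, not in terms of $n$ and $L$ alone.
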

The tools needed to prove Theorem \ref{thm:triangulation} are given in
\cite{CMS}; here we give an outline, referring to that paper for the more
difficult parts.
\begin{proof}[Proof of Theorem \ref{thm:triangulation}.]
  We start by fixing an $\epsi$-net of points $\{x_1,\ldots,x_N\}$ on $M$, for
  some $\epsi<1/2$ possibly depending on $n$: by definition, the $\epsi$-balls
  around the $x_i$ are disjoint and the $2\epsi$-balls cover $M$.  The bounded
  geometry condition implies that the exponential map onto each of these balls
  is a diffeomorphism and not too distorted.  Moreover, the lower bound on
  curvature implies that $B_{4\epsi}(x_i)$ intersects at most some $C_0(n)$ of
  the $4\epsi$-balls around the other $x_j$.  Thus there is a
  $(C_0+1)$-coloring of the $x_i$ into sets $A_0,\ldots,A_{C_0}$ so that no two
  $4\epsi$-balls of the same color intersect.

  We can triangulate each $B_{3\epsi}(x_i)$ with the image under the exponential
  map of some standard Euclidean mesh; the distortion of each simplex in each of
  these triangulations is bounded by some constant depending only on $n$.  Now
  \cite[Lemma 6.3]{CMS} shows that we can take any two such local boundedly
  distorted meshes and connect them by a mesh which still has bounded (if
  somewhat worse) distortion.  We apply this lemma first to interpolate between
  the triangulations of $B_{3\epsi}(x_i)$ and $B_{3\epsi}(x_j)$ for $x_i \in A_0$
  and $x_j \in A_1$; then to add in the $3\epsi$-balls around the points in
  $A_2$; and so on.  We thus worsen the geometry of the triangulation a total of
  $C_0$ times.  In the end, we get a triangulation of all of $M$ for which the
  geometry of the simplices still depends only on $n$.
\end{proof}
Theorem \ref{thm:smoothing} follows easily from the fact that the groups
$\Theta_n$ of exotic spheres are all finite (except perhaps in dimension $4$,
where any exotic spheres are not standard PL spheres).
\begin{proof}[Proof of Theorem \ref{thm:smoothing}.]
  Let $(M,\tau)$ be a triangulated $n$-manifold, and suppose that $\tau$ has
  at most $L$ simplices adjacent to every vertex.  In particular, the star of
  each $k$-simplex can have a finite number $a_k$ of possible configurations.

  We will show that there is a finite set $\Gamma$ of Riemannian metrics on the
  $n$-simplex, depending only on $n$ and $L$, such that every smoothing of $M$
  has a metric which, when restricted to each simplex, is isometric to one of
  the metrics in $\Gamma$.  This is sufficient to prove the theorem because the
  constants $C_1$, $C_2$, and $C_3$ are then obtained by maximizing over the
  finite set of possible local configurations.

  We construct $\Gamma$ by induction, giving a process that builds all possible
  smoothings of $M$ as Riemannian manifolds with metrics on simplices chosen
  from $\Gamma$.  First let $\tau'$ be the two times barycentric subdivision of
  $\tau$, and let
  \[M_0 \subset M_1 \subset \cdots \subset M_n=M\]
  be open submanifolds of $M$ such that each $M_k$ includes the star of the
  $k$-skeleton of $\tau$ in $\tau'$.  Then $M_0$ is a disjoint union of balls
  around the vertices of $\tau$, and going from $M_{k-1}$ to $M_k$ means gluing
  in copies of $D^k \times D^{n-k}$ for each $k$-simplex of $M$.

  Clearly $M_0$ has a unique smoothing.  For every possible link of a vertex
  consisting of at most $L$ simplices, we fix a Riemannian metric on its cone;
  this gives a Riemannian metric on $M_0$.

  Now suppose we have a smoothing of $M_{k-1}$ equipped with a Riemannian metric
  $g$, in which each $n$-simplex of $\tau'$ is isometric to one of a finite list
  depending on $n$ and $L$.  For every $k$-simplex $\Delta$ of $\tau$, if the
  smoothing extends to $\st_{\tau'}(\Delta)$, then the set of such extensions is
  in bijection with the finite group $\Theta_k$ of exotic $k$-spheres.  We fix
  Riemannian metrics on all these extensions, depending on the combinatorial
  structure of $\st_{\tau'}(\Delta)$, the metric on
  $\st_{\tau'}(\partial \Delta)$, and the element of $\Theta_k$.  By induction,
  this data takes a finite set of values depending only on $k$ and $L$.

  After the $n$th step, for every smoothing of $M$, we have obtained a metric
  with these properties.
\end{proof}

\bibliographystyle{amsalpha}
\bibliography{pl}

@misc{morse,
  title={Morse complexity of homology classes},
  author={Manin, F. and Tshishiku, B. and Weinberger, Sh.},
  howpublished={preprint, arXiv:2607.20259 [math.GT]},
  year={2026}
}

@book {Alex,
    AUTHOR = {Alexandrov, A. D.},
     TITLE = {Selected works. {P}art {II}: {I}ntrinsic geometry of convex surfaces},
      NOTE = {translated from the 1948 Russian original by S. Vakhrameyev},
 PUBLISHER = {Chapman \& Hall/CRC},
      YEAR = {2006},
     PAGES = {xiv+426},
      ISBN = {978-0-415-29802-5; 0-415-29802-4},
   MRCLASS = {01A75 (52-03 53-03)},
  notMRNUMBER = {2193913},
}

@article {APS,
    AUTHOR = {Atiyah, M. F. and Patodi, V. K. and Singer, I. M.},
     TITLE = {Spectral asymmetry and {R}iemannian geometry. {I}},
   JOURNAL = {Math. Proc. Cambridge Philos. Soc.},
  FJOURNAL = {Mathematical Proceedings of the Cambridge Philosophical
              Society},
    VOLUME = {77},
      YEAR = {1975},
     PAGES = {43--69},
      ISSN = {0305-0041},
   MRCLASS = {58G10 (57D85 57E15)},
  notMRNUMBER = {397797},
MRREVIEWER = {Kh. Knapp},
       DOI = {10.1017/S0305004100049410},
       URL = {https://doi.org/10.1017/S0305004100049410},
}

@article {APS2,
    AUTHOR = {Atiyah, M. F. and Patodi, V. K. and Singer, I. M.},
     TITLE = {Spectral asymmetry and {R}iemannian geometry},
   JOURNAL = {Bull. London Math. Soc.},
  FJOURNAL = {The Bulletin of the London Mathematical Society},
    VOLUME = {5},
      YEAR = {1973},
     PAGES = {229--234},
      ISSN = {0024-6093},
   MRCLASS = {58G99 (35P05)},
 notMRNUMBER = {331443},
MRREVIEWER = {V. W. Guillemin},
       DOI = {10.1112/blms/5.2.229},
       URL = {https://doi.org/10.1112/blms/5.2.229},
}

@book {BRS,
    AUTHOR = {Buoncristiano, S. and Rourke, C. P. and Sanderson, B. J.},
     TITLE = {A geometric approach to homology theory},
    SERIES = {London Mathematical Society Lecture Note Series, No. 18},
 PUBLISHER = {Cambridge University Press, Cambridge-New York-Melbourne},
      YEAR = {1976},
     PAGES = {iii+149},
   MRCLASS = {57A65 (55B35 57C20 57C50)},
  notMRNUMBER = {0413113},
MRREVIEWER = {W. Metzler},
}

@article {Cha,
    AUTHOR = {Cha, J. C.},
     TITLE = {A topological approach to {C}heeger-{G}romov universal bounds
              for von {N}eumann {$\rho$}-invariants},
   JOURNAL = {Comm. Pure Appl. Math.},
  FJOURNAL = {Communications on Pure and Applied Mathematics},
    VOLUME = {69},
      YEAR = {2016},
    NUMBER = {6},
     PAGES = {1154--1209},
      ISSN = {0010-3640},
   MRCLASS = {57R57 (58J22)},
  notMRNUMBER = {3493628},
MRREVIEWER = {Michael S. Farber},
       DOI = {10.1002/cpa.21597},
       URL = {https://doi.org/10.1002/cpa.21597},
}

@misc {CL,
author={Cha, J. C. and Lim, G.},
title={Linearity of universal bounds for {C}heeger--{G}romov
{$\rho$}-invariants for {$(4k-1)$}-manifolds},
note={in preparation}
}

@article {CDMW,
    AUTHOR = {Chambers, G. R. and Dotterrer, D. and Manin, F.
              and Weinberger, Sh.},
     TITLE = {Quantitative null-cobordism},
      NOTE = {With an appendix by Manin and Weinberger},
   JOURNAL = {J. Amer. Math. Soc.},
  FJOURNAL = {Journal of the American Mathematical Society},
    VOLUME = {31},
      YEAR = {2018},
    NUMBER = {4},
     PAGES = {1165--1203},
      ISSN = {0894-0347},
   MRCLASS = {57Q20 (53C23 57R75)},
       DOI = {10.1090/jams/903},
       URL = {https://doi.org/10.1090/jams/903},
}

@incollection {CheeGr,
    AUTHOR = {Cheeger, J. and Gromov, M.},
     TITLE = {On the characteristic numbers of complete manifolds of bounded
              curvature and finite volume},
 BOOKTITLE = {Differential geometry and complex analysis},
     PAGES = {115--154},
 PUBLISHER = {Springer, Berlin},
      YEAR = {1985},
   MRCLASS = {58G10 (53C20 58G11)},
  notMRNUMBER = {780040},
MRREVIEWER = {J\'{o}zef Dodziuk},
}

@article {CMS,
    AUTHOR = {Cheeger, J. and M\"{u}ller, W. and Schrader, R.},
     TITLE = {On the curvature of piecewise flat spaces},
   JOURNAL = {Comm. Math. Phys.},
  FJOURNAL = {Communications in Mathematical Physics},
    VOLUME = {92},
      YEAR = {1984},
    NUMBER = {3},
     PAGES = {405--454},
      ISSN = {0010-3616},
   MRCLASS = {53C20 (53C80 83C45)},
  notMRNUMBER = {734226},
MRREVIEWER = {J\'{o}zef Dodziuk},
       URL = {http://projecteuclid.org/euclid.cmp/1103940867},
}

@article {CoTh,
    AUTHOR = {Costantino, F. and Thurston, D.},
     TITLE = {3-manifolds efficiently bound 4-manifolds},
   JOURNAL = {J. Topol.},
  FJOURNAL = {Journal of Topology},
    VOLUME = {1},
      YEAR = {2008},
    NUMBER = {3},
     PAGES = {703--745},
      ISSN = {1753-8416},
   MRCLASS = {57N70 (57M20 57N10 57N13)},
  notMRNUMBER = {2417451},
MRREVIEWER = {Daniel Ruberman},
       DOI = {10.1112/jtopol/jtn017},
       URL = {https://doi.org/10.1112/jtopol/jtn017},
}

@article {Friedman,
    AUTHOR = {Friedman, G.},
     TITLE = {Stratified and unstratified bordism of pseudomanifolds},
   JOURNAL = {Topology Appl.},
  FJOURNAL = {Topology and its Applications},
    VOLUME = {194},
      YEAR = {2015},
     PAGES = {51--92},
      ISSN = {0166-8641},
   MRCLASS = {57N80 (55N22 55N33 57Q20)},
  notMRNUMBER = {3404604},
MRREVIEWER = {Markus Banagl},
       DOI = {10.1016/j.topol.2015.07.014},
       URL = {https://doi.org/10.1016/j.topol.2015.07.014},
}

@incollection {Goresky,
    AUTHOR = {Goresky, M.},
     TITLE = {Witt space cobordism theory (after {P}. {S}iegel)},
 BOOKTITLE = {Intersection cohomology ({B}ern, 1983)},
    SERIES = {Progr. Math.},
    VOLUME = {50},
     PAGES = {209--214},
 PUBLISHER = {Birkh\"{a}user Boston, Boston, MA},
      YEAR = {1984},
   MRCLASS = {55N35},
  notMRNUMBER = {788178},
       DOI = {10.1007/978-0-8176-4765-0\_7},
       URL = {https://doi.org/10.1007/978-0-8176-4765-0_7},
}

@incollection {GrQHT,
    AUTHOR = {Gromov, M.},
     TITLE = {Quantitative homotopy theory},
 BOOKTITLE = {Prospects in mathematics ({P}rinceton, {NJ}, 1996)},
     PAGES = {45--49},
 PUBLISHER = {Amer. Math. Soc., Providence, RI},
      YEAR = {1999},
   MRCLASS = {57N65 (53C23 55P99)},
  notMRNUMBER = {1660471},
MRREVIEWER = {Vagn Lundsgaard Hansen},
}

@incollection {GrPC,
    AUTHOR = {Gromov, M.},
     TITLE = {Positive curvature, macroscopic dimension, spectral gaps and
              higher signatures},
 BOOKTITLE = {Functional analysis on the eve of the 21st century, {V}ol.
              {II} ({N}ew {B}runswick, {NJ}, 1993)},
    SERIES = {Progr. Math.},
    VOLUME = {132},
     PAGES = {1--213},
 PUBLISHER = {Birkh\"{a}user Boston, Boston, MA},
      YEAR = {1996},
   MRCLASS = {53C21 (53C20 57R20)},
  notMRNUMBER = {1389019},
MRREVIEWER = {Christopher W. Stark},
       DOI = {10.1007/s10107-010-0354-x},
       URL = {https://doi.org/s10107-010-0354-x},
}

@article{GrGu,
  title={Generalizations of the {K}olmogorov--{B}arzdin embedding estimates},
  author={Gromov, M. and L. Guth},
  journal={Duke Mathematical Journal},
  volume={161},
  number={13},
  pages={2549--2603},
  year={2012},
  publisher={Duke University Press}
}

@book{HM,
    AUTHOR = {Hirsch, M. W. and Mazur, B.},
     TITLE = {Smoothings of piecewise linear manifolds},
    SERIES = {Annals of Mathematics Studies, No. 80},
 PUBLISHER = {Princeton University Press, Princeton, N. J.; University of
              Tokyo Press, Tokyo},
      YEAR = {1974},
     PAGES = {ix+134},
   MRCLASS = {57D10},
  notMRNUMBER = {0415630},
MRREVIEWER = {J. P. E. Hodgson},
}

@article {LR,
    AUTHOR = {Levitt, N. and Rourke, C.},
     TITLE = {The existence of combinatorial formulae for characteristic
              classes},
   JOURNAL = {Trans. Amer. Math. Soc.},
  FJOURNAL = {Transactions of the American Mathematical Society},
    VOLUME = {239},
      YEAR = {1978},
     PAGES = {391--397},
      ISSN = {0002-9947},
   MRCLASS = {57D20 (57C05)},
  notMRNUMBER = {494134},
MRREVIEWER = {Robert D. MacPherson},
       DOI = {10.2307/1997861},
       URL = {https://doi.org/10.2307/1997861},
}

@article {LW,
    AUTHOR = {Lim, G. and Weinberger, Sh.},
     TITLE = {Bounds on {C}heeger-{G}romov invariants and simplicial
              complexity of triangulated manifolds},
   JOURNAL = {J. Reine Angew. Math.},
  FJOURNAL = {Journal f\"ur die Reine und Angewandte Mathematik. [Crelle's
              Journal]},
    VOLUME = {808},
      YEAR = {2024},
     PAGES = {271--297},
      ISSN = {0075-4102,1435-5345},
   MRCLASS = {57Q60 (57Q12 57Q15)},
  notMRNUMBER = {4708122},
MRREVIEWER = {Greg\ Friedman},
       DOI = {10.1515/crelle-2024-0003},
       URL = {https://doi-org.myaccess.library.utoronto.ca/10.1515/crelle-2024-0003},
}

@book {MadMil,
    AUTHOR = {Madsen, I. and Milgram, R. J.},
     TITLE = {The classifying spaces for surgery and cobordism of manifolds},
    SERIES = {Annals of Mathematics Studies},
    VOLUME = {92},
 PUBLISHER = {Princeton University Press, Princeton, N.J.; University of
              Tokyo Press, Tokyo},
      YEAR = {1979},
     PAGES = {xii+279},
      ISBN = {0-691-08225-1},
   MRCLASS = {57N70 (55P47 55R35 57Q60)},
  notMRNUMBER = {548575},
MRREVIEWER = {Yu. P. Solov\cprime ev},
}

@article {Siegel,
    AUTHOR = {Siegel, P. H.},
     TITLE = {Witt spaces: a geometric cycle theory for {$K{\rm
              O}$}-homology at odd primes},
   JOURNAL = {Amer. J. Math.},
  FJOURNAL = {American Journal of Mathematics},
    VOLUME = {105},
      YEAR = {1983},
    NUMBER = {5},
     PAGES = {1067--1105},
      ISSN = {0002-9327},
   MRCLASS = {57Q20 (55N15)},
  notMRNUMBER = {714770},
MRREVIEWER = {V. P. Snaith},
       DOI = {10.2307/2374334},
       URL = {https://doi.org/10.2307/2374334},
}

@article {Thom0,
    AUTHOR = {Thom, R.},
     TITLE = {Vari\'{e}t\'{e}s diff\'{e}rentiables cobordantes},
   JOURNAL = {C. R. Acad. Sci. Paris},
  FJOURNAL = {Comptes Rendus Hebdomadaires des S\'{e}ances de l'Acad\'{e}mie des
              Sciences},
    VOLUME = {236},
      YEAR = {1953},
     PAGES = {1733--1735},
      ISSN = {0001-4036},
   MRCLASS = {56.0X},
  notMRNUMBER = {55689},
MRREVIEWER = {W. S. Massey},
}

@article {Thom,
    AUTHOR = {Thom, R.},
     TITLE = {Quelques propri\'{e}t\'{e}s globales des vari\'{e}t\'{e}s diff\'{e}rentiables},
   JOURNAL = {Comment. Math. Helv.},
  FJOURNAL = {Commentarii Mathematici Helvetici},
    VOLUME = {28},
      YEAR = {1954},
     PAGES = {17--86},
      ISSN = {0010-2571},
   MRCLASS = {56.0X},
  notMRNUMBER = {61823},
MRREVIEWER = {W. S. Massey},
       DOI = {10.1007/BF02566923},
       URL = {https://doi.org/10.1007/BF02566923},
}

@book {Wall,
    AUTHOR = {Wall, C. T. C.},
     TITLE = {Surgery on compact manifolds},
    SERIES = {Mathematical Surveys and Monographs},
    VOLUME = {69},
   EDITION = {Second},
      NOTE = {Edited and with a foreword by A. A. Ranicki},
 PUBLISHER = {American Mathematical Society, Providence, RI},
      YEAR = {1999},
     PAGES = {xvi+302},
      ISBN = {0-8218-0942-3},
   MRCLASS = {57R67 (19J25 57-02)},
  notMRNUMBER = {1687388},
       DOI = {10.1090/surv/069},
       URL = {https://doi.org/10.1090/surv/069},
}

@incollection {rho,
    AUTHOR = {Weinberger, Sh.},
     TITLE = {Higher {$\rho$}-invariants},
 BOOKTITLE = {Tel {A}viv {T}opology {C}onference: {R}othenberg {F}estschrift
              (1998)},
    SERIES = {Contemp. Math.},
    VOLUME = {231},
     PAGES = {315--320},
 PUBLISHER = {Amer. Math. Soc., Providence, RI},
      YEAR = {1999},
   MRCLASS = {57R67 (19J25)},
  notMRNUMBER = {1707352},
MRREVIEWER = {A. A. Ranicki},
       DOI = {10.1090/conm/231/03369},
       URL = {https://doi.org/10.1090/conm/231/03369},
}

@article {OnSS,
    AUTHOR = {Weinberger, Sh.},
     TITLE = {On smooth surgery},
   JOURNAL = {Comm. Pure Appl. Math.},
  FJOURNAL = {Communications on Pure and Applied Mathematics},
    VOLUME = {43},
      YEAR = {1990},
    NUMBER = {5},
     PAGES = {695--696},
      ISSN = {0010-3640,1097-0312},
   MRCLASS = {57R55 (57R67)},
  notMRNUMBER = {1057237},
MRREVIEWER = {Frank\ Quinn},
       DOI = {10.1002/cpa.3160430507},
       URL = {https://doi-org.myaccess.library.utoronto.ca/10.1002/cpa.3160430507},
}
\end{document}